\def\d{\mathrm{d}}
 \providecommand{\Xint}[1]{\mathchoice
    {\XXint\displaystyle\textstyle{#1}}%
    {\XXint\textstyle\scriptstyle{#1}}%
    {\XXint\scriptstyle\scriptscriptstyle{#1}}%
    {\XXint\scriptscriptstyle\scriptscriptstyle{#1}}%
    \!\int}
  \providecommand{\XXint}[3]{{\setbox0=\hbox{$#1{#2#3}{\int}$}
      \vcenter{\hbox{$#2#3$}}\kern-.5\wd0}}
  \providecommand{\dashint}{\mathop{\Xint-}}
\def\ristretto{\lfloor}
\newtheorem{theorem}{Theorem}[section]
\newtheorem{corollary}[theorem]{Corollary}
\newtheorem{lemma}[theorem]{Lemma}
\newtheorem{proposition}[theorem]{Proposition}
\theoremstyle{definition}
\newtheorem{definition}[theorem]{Definition}
\newtheorem{remark}[theorem]{Remark}
\numberwithin{equation}{section}
\newcommand{\inc}[1]{\hyperref[inc]{{\normalfont(inc){\ensuremath{_{#1}}}}}}
\newcommand{\dec}[1]{\hyperref[dec]{{\normalfont(dec){\ensuremath{_{#1}}}}}}
\newcommand{\azero}{\hyperref[azero]{{\normalfont(A0)}}}
\newcommand{\vauno}{\hyperref[va1]{{\normalfont(VA1)}}}
\newcommand{\R}{\mathbb{R}}
\newcommand{\N}{\mathbb{N}}
\newcommand{\e}{\varepsilon}
\newcommand{\ssubset}{\subset\joinrel\subset}
\newcommand{\Rd}{{\R}^d}
\newcommand{\Ld}{{\mathcal{L}}^d}
\newcommand{\res}{\mathop{\hbox{\vrule height 7pt width .5pt depth 0pt
\vrule height .5pt width 6pt depth 0pt}}\nolimits}
\def\Xint#1{\mathchoice 
  {\XXint\displaystyle\textstyle{#1}}%
  {\XXint\textstyle\scriptstyle{#1}}%
  {\XXint\scriptstyle\scriptscriptstyle{#1}}%
  {\XXint\scriptscriptstyle\scriptscriptstyle{#1}}%
  \!\int} 
\def\XXint#1#2#3{{\setbox0=\hbox{$#1{#2#3}{\int}$} 
  \vcenter{\hbox{$#2#3$}}\kern-.5\wd0}} 
\def\-int{\Xint -}
\newcommand{\Phiw}{\Phi_{\rm{w}}}
\newcommand{\Phic}{\Phi_{\rm{c}}}
\numberwithin{equation}{section}
\begin{document}  

\date{\today}

\title[Free-discontinuity problems  with non-standard growth]{Strong existence for  free-discontinuity problems  with non-standard growth}

\author{Chiara Leone}
\address[Chiara Leone]{Department of Mathematics and Applications ``R. Caccioppoli'', University of Naples Federico II, Via Cintia, Monte S. Angelo, 80126 Naples, Italy}
\email{chiara.leone@unina.it}
 
\author{Giovanni Scilla}
\address[Giovanni Scilla]{Department of Mathematics and Applications ``R. Caccioppoli'', University of Naples Federico II, Via Cintia, Monte S. Angelo, 80126 Naples, Italy}
\email[Giovanni Scilla]{giovanni.scilla@unina.it}

\author{Francesco Solombrino}
\address[Francesco Solombrino]{Department of Mathematics and Applications ``R. Caccioppoli'', University of Naples Federico II, Via Cintia, Monte S. Angelo, 80126 Naples, Italy}
\email{francesco.solombrino@unina.it}

\author{Anna Verde}
\address[Anna Verde]{Department of Mathematics and Applications ``R. Caccioppoli'', University of Naples Federico II, Via Cintia, Monte S. Angelo, 80126 Naples, Italy}
\email{anna.verde@unina.it}

\subjclass[2020]{49J45, 46E30, 35B65}

\keywords{free-discontinuity problems, non-standard growth, regularity, minimizers}

\begin{abstract}
{An Ahlfors-type regularity result for free-discontinuity energies defined on the space $SBV^{\varphi}$ of special functions of bounded variation with $\varphi$-growth, where $\varphi$ is a generalized Orlicz function, is proved. Our analysis expands on the regularity theory for minimizers of a class of free-discontinuity problems in the non-standard growth case. } 
\end{abstract}

\maketitle

\tableofcontents

\section{Introduction}

This paper is concerned with existence of strong minimizers for a model functional of the form
\begin{equation}\label{modello-strong}
\int_\Omega\varphi(x,|\nabla u|)\,dx+ \mathcal{H}^{d-1} (K),
\end{equation}
to be minimized on pairs $(u, K)$ of smooth functions $u$ outside of a closed discontinuity set $K$, whose Hausdorff measure $\mathcal{H}^{d-1} (K)$ is penalised. The prototypical example (for $\varphi(x, \xi)=|\xi|^2$) was introduced by Mumford and Shah \cite{MS} in the context of image segmentation, and later used as well for describing failure phenomena such as fracture and damage in elastic materials after the seminal paper by Francfort and Marigo \cite{FrM} on the variational reformulation of Griffith's theory of brittle fracture. In our paper we will however not assume a standard $p$-growth assumption for the bulk energy integrand, but rather a non-standard growth in generalized  Orlicz type of spaces (see, e.g.,~\cite{HH}). As relevant examples of bulk energies undergoing non-standard growth we report here the perturbed {\em Orlicz}, the so-called {\em variable exponent}, 
and the {\em double-phase} case
	\begin{equation}
	\label{e:intro1}
	a(x)\varphi(|\xi|),\qquad
	|\xi|^{p(x)}, \qquad\text{and} \qquad |\xi|^{p} + a(x) |\xi|^{q} \qquad \text{for $(x, \xi) \in \R^{d} \times \R^{d}$,}
	\end{equation}
for suitable choices of the $N$-function $\varphi:[0,+\infty)\to[0,+\infty)$, of the exponent function $p \colon \R^{d} \to (1, +\infty)$, of the exponents  $1 < p < q <+\infty$, and of the weight function~$a \colon \R^{d} \to [0, +\infty)$, while a list of relevant examples in the literature can be found, for instance in \cite[Section 4.3]{ars}.  

\noindent In a Sobolev setting, that is when the  term $\mathcal{H}^{d-1} (K)$ does not appear, integral functionals with  non-standard growth first appeared in the works of Zhikov~\cite{zikov, zikov3} for modeling composite materials characterized by a strongly anisotropic behavior, and have attracted an increasing attention in the last decades. A huge amount  of results for these models, their variants as well as borderline cases  have been investigated. Each single case has been studied in a peculiar way, relying on its particular structure. A unified approach to treat several cases of non-standard growth  has been recently  proposed in the monograph \cite{HH}, which can be consulted also for the rich bibliography (see also \cite{HastoOk} and the references therein for the regularity topic).

The coupling of bulk energies of this kind with a free discontinuity term is therefore a natural possibility in the variational approach to the emergence of singularities with codimension $1$ in anisotropic media.

\noindent The usual strategy for proving well-posedness of \eqref{modello-strong} goes through a weak reformulation in the space  $SBV$ of special functions of bounded variation (see Section \ref{s:bv}) of the form
\begin{equation}\label{modello-weak}
\int_\Omega\varphi(x,|\nabla u|)\,dx+ \mathcal{H}^{d-1} (J_u).
\end{equation}
Above, $J_u$ is the set of jump discontinuities of $u$ with normal $\nu_u$, which, exactly like the gradient $\nabla u$, has in general to be understood in an approximate measure-theoretical sense. If $\varphi$ is superlinear, and convex in the second variable, existence of weak minimizers can be recovered by combining  De Giorgi's lower semicontinuity theorem \cite{DeG} with the closure and compactness results in $SBV$ by Ambrosio (see \cite[Theorem~4.7 and 4.8]{Ambrosio-Fusco-Pallara:2000}). Then, in order to get strong existence, the crucial point is to prove that minimizers  of \eqref{modello-weak} have an essentially closed jump set and are smooth outside of it. Practically, this is achieved by proving that the singular set $S_u$, a superset of the crack set $J_u$ (which in a $BV$ setting, differs therefrom only by a $\mathcal{H}^{d-1}$-null set),  is locally {\it Ahlfors-regular}, meaning that it satisfies the uniform density estimate
\[
\mathcal{H}^{d-1}(S_u\cap B_\rho(x_0)) \ge \theta_0 \rho^{d-1}
\]
for $x_0\in S_u$ and sufficiently small balls $B_\rho(x_0)$, with $\theta_0$ independent of $x_0$ and $\rho$.  

For the model case $\varphi(x, \xi)=|\xi|^p$, this result was obtained in the seminal paper \cite{DeGCarLea}.  There, a contradiction-compactness argument shows that, in regions with small crack, the energy in a ball $B_\rho(x_0)$ decays on the order of $\rho^d$ (like a bulk energy), much faster than the surface energy scaling $\rho^{d-1}$. A crucial step to this aim is represented by a Poincar\'e-Wirtinger inequality for $SBV$ functions with small jump set in a ball \cite[Theorem~4.14]{Ambrosio-Fusco-Pallara:2000}, which  is exploited to replace (a suitable rescaling of) optimal  sequences with more regular functions, without excessively increasing the energy. Such an inequality is obtained with respect to the $L^p$-norm, and its proof heavily exploits its homogeneity. The result in \cite{DeGCarLea} has been generalized to  nonhomogenous bulk integrands with $p$-growth in \cite{FMT}, and recently to the variable exponent  setting in \cite{LSSV}  (see also \cite{DiPS} for constrained energy functionals in  the variable exponent setting). Also the vector-valued case of linearly elastic bulk integrands (that is, depending only on the symmetrized part $e(u)$ of the strain $\nabla u$) has been investigated in \cite{Chambolle-Crismale,  Conti-Focardi-Iurlano,  Friedrich-Labourie-Stinson} through  a delicate reformulation in spaces of functions of bounded deformation, where a number of additional technical issues have to be overcome.


\noindent \emph{Description of our results.} In this paper we will deal with integrands $\varphi(x, |\xi|)$ which are convex in the last variable. We do not focus on a single case study, but rather follow a general perspective fixing a fair set of assumptions on the bulk integrand $\varphi$, under which existence of strong minimizers can be recovered. These conditions, summarized in \ref{hpuno}-\ref{hptre}, are borrowed from the regularity theory for variational integrals in generalized Orlicz spaces \cite{HH, HastoOk}, and here specified to the free-discontinuity setting. In particular, the Orlicz setting (when $\varphi'(t)$ is equivalent to $t\varphi''(t)$), the variable exponent setting (under strongly log-H\"older continuity of the variable exponent), and the double-phase case (under suitable H\"older continuity of the function $a(x)$ in \eqref{e:intro1}) are covered by our results (see Section 8 in \cite{HastoOk}, where the authors show that \ref{hpuno}-\ref{hptre} include these special structures).
Notice that \ref{hpuno}-\ref{hpdue} imply in particular that the growth from above and below of the energy is ruled by two possibly different exponents $1<p<q$.  Condition \ref{hptre} is fundamental to our blow-up analysis. Roughly speaking, this condition does not allow for a too degenerate behavior on small balls contained in~$\Omega$ of the functions
\[
\varphi^{+}_{B} (t) := \sup_{x \in B} \, \varphi(x, t) \qquad \varphi^{-}_{B} (t):= \inf_{x \in B} \, \varphi(x, t) \qquad \text{for $t \in [0, +\infty)$ and $B \subseteq \Omega$},
\]
since the first one has to be controlled by the second one, up to  a vanishing error when the size of the ball goes to zero. 

The proof of the crucial density lower bound goes, exactly as in  \cite{DeGCarLea}, through a decay lemma (see Lemma \ref{lem:decay}). One assumes by contradiction that the energy is decaying faster than $\rho^{d-1}$ around a jump point $x_0$. The goal is then to show that a scaled copy of blown-up sequences converges to a  Orlicz-Sobolev minimizer of a variational integral of the type
\[
\int_{B_1} \varphi_\infty(|\nabla u|))\,\mathrm{d}x,
\]
where the function $\varphi_\infty$ does not depend on $x$ and is recovered as locally uniform limit of a scaled version of $\varphi$ acting on a scaled deformation gradient. For the above problem, decay estimates only depending on the dimension and the growth exponents $p$ and $q$ are provided by \cite{DSV,HastoOk} and can be used to get a contradiction.
A crucial point for this procedure is the construction of nearly optimal sequence whose Orlicz norm is controlled, despite the presence of a small jump set, only in terms of the Orlicz norm of the gradient. This can be achieved by means of a Poincar\'e-type inequality recently introduced in \cite{ars}, which is proved under quite different lines than in \cite{DeGCarLea}, relying on some ideas in \cite{CianchiFuscoCrelle, CianchiFuscoBV}. Actually, in our setting a slight refinement thereof is needed, that is summarized in Theorem \ref{thm:poincsbvphi}. It can be obtained with a prompt adaptation of the arguments in \cite[Section 5]{ars}. In  the form we state, it eventually allows one to get equi-integrability estimates for some remainder terms appearing in Lemma \ref{lem:decay}.

Actually, {an application} of the  Poincar\'e-type inequality is not enough to our purposes, as the bulk energy may explicitly depend on the point $x$ in the reference configuration. This is a point where \ref{hptre} comes essentially into play, in order to assure that some remainder terms appearing in the construction can be taken uniformly small.
Once this is established, one can recover existence of strong minimizers in Theorem \ref{thm:main} following in the footsteps of the classical proof  in  \cite{DeGCarLea}, up to the necessary adaptations to the generalized Orlicz setting.

\noindent \emph{Outlook.} We provide a well-posedness result for \eqref{modello-strong} under a generalized Orlicz growth for the bulk integrand, establishing of a general framework for the analysis of the problem, which encompasses diverse relevant case studies under a unified perspective. Along with the usage of   the Poincar\'e-type inequality for $SBV$ functions with Orlicz growth in the gradient, together with assumption \ref{hptre}, these are the main points where our analysis departs from the classical program in \cite{DeGCarLea}. It must be however mentioned that some interesting examples are still not covered by the present analysis and deserve further investigation,  possibly requiring the introduction of new tools. For instance, an $L\log L$-growth on $\varphi$, although superlinear, is excluded by \eqref{inc}. While the regularity properties of minimizers of the corresponding variational integrals in Sobolev spaces is by now well understood (see \cite{DeFM,FM}), a blow-up procedure would likely deliver no significant information in the free-discontinuity setting, as the limit function $\varphi_\infty$ turns out to be linear. A different strategy has therefore to be found in order to deal with such a problem. 

\noindent Let us also remark that the existence theory for weak minimizers can encompass a broader range of situations, in particular in the vectorial case: $\varphi(x, \xi)$  may indeed be a non radial, quasiconvex integrand with $p$-growth (\cite[Theorem~5.29]{Ambrosio-Fusco-Pallara:2000}) or, under suitable assumptions, with a variable exponent or Orlicz growth (\cite{ars, DCLV, LV}).  Also the investigation of general free discontinuity functionals with bulk energies having  a mixed $(p,q)$-growth condition (which is implied by, but not equivalent to   \ref{hpuno}-\ref{hpdue}), relevant for the modeling of determinant constraints, is not yet well understood. In this case, even the well-posedness of \eqref{modello-weak} is actually not clear, by lack of suitable lower semincontinuity results.

\section{Basic notation and preliminaries}\label{sec: prel}

 We start with some basic notation.   Let $\Omega \subset \R^d$  be  open and bounded.
 For every $x\in \Rd$ and $r>0$ we indicate by $B_r(x) \subset \Rd$ the open ball with center $x$ and radius $r$. If $x=0$, we will often use the shorthand $B_r$.  For $x$, $y\in \Rd$, we use the notation $x\cdot y$ for the scalar product and $|x|$ for the  Euclidean  norm.   
The $m$-dimensional Lebesgue measure of the unit ball in $\R^m$ is indicated by $\kappa_m$ for every $m \in \N$.   We denote by $\Ld$ and $\mathcal{H}^k$ the $d$-dimensional Lebesgue measure and the $k$-dimensional Hausdorff measure, respectively.  
The closure of $A$ is denoted by $\overline{A}$. The diameter of $A$ is indicated by ${\rm diam}(A)$.  
We write $\chi_A$ for the  characteristic  function of any $A\subset  \R^d$, which is 1 on $A$ and 0 otherwise.  

Given two functions $f,g:[0,+\infty)\to\mathbb{R}$, we write $f\sim g$, and we say that $f$ and $g$ are equivalent, if there exist constants $c_{1}, c_{2} >0$ such that $c_{1}g(t) \leq f(t) \leq c_{2}g(t)$ for any $t\ge 0$. Similarly the symbol $\lesssim$ stands for $\le$ up to a constant. {$L^0(\Omega)$ denotes the set of the measurable functions on $\Omega$.}

\subsection{Generalized $\Phi$-functions and Orlicz spaces}\label{sec:genorl}

We introduce some basic definitions and useful facts about generalized $\Phi$-functions and Orlicz spaces. We will restrict to only considering concepts we will use. We refer the reader to \cite{HH} for a comprehensive treatment of the topic. 

{\begin{definition}\label{weakphi}
Let $\varphi: [0,+\infty)\to [0, +\infty]$ be increasing with $\varphi(0) = 0$, $\lim_{t\to 0^+} \varphi(t) = 0$ and $\lim_{t\to+\infty}\varphi(t) = +\infty$. Such $\varphi$ is called a
\begin{enumerate}
\item[(i)] \emph{weak $\Phi$-function} if $\frac{\varphi(t)}{t}$ is \emph{almost increasing}, meaning that there exists $L\ge 1$ such that $\frac{\varphi(t)}{t}\le L\frac{\varphi(s)}{s}$ for $0<t\le s$.
\item[(ii)] \emph{convex $\Phi$-function} if $\varphi$ is left-continuous and convex. 
\end{enumerate}
\end{definition}
By virtue of Remark \ref{rem:varinc}, each convex $\Phi$-function is a weak $\Phi$-function. If $\varphi$ is a convex $\Phi$-function, then there exists $\varphi'$ the \emph{right derivative} of $\varphi$, which is non-decreasing and right-continuous, and such that
\begin{equation*}
\varphi(t)=\int_0^t \varphi'(s)\,\mathrm{d}s \,.
\end{equation*}
A special subclass of convex $\Phi$-functions is represented by the so called ``nice Young functions'', also known as $N$-functions (see, e.g., \cite[Ch.I]{KR}).}

{\begin{definition}\label{Nfunc}
A function $\varphi: [0, \infty)\rightarrow [0, \infty)$ is said to be an \emph{$N$-function} if it admits the representation
\begin{equation*}
\varphi(t)=\int_0^t a(\tau)\,\mathrm{d}\tau
\end{equation*}
where $a(s)$ is right-continuous, non-decreasing for $s>0$, $a(s)>0$ for $s>0$ and satisfies the conditions
\begin{equation}
a(0)=0\,,\quad \lim_{s\to+\infty} a(s) =+\infty\,.
\label{eq:asympright}
\end{equation}
\end{definition}
The function $a(t)$ is nothing else than the right-derivative of $\varphi(t)$. 
As a straightforward consequence of the definition, we have that an $N$-function $\varphi$ is continuous, $\varphi(0)=0$ and $\varphi$ is increasing. Moreover, $\varphi$ is a convex function, and, in view of Remark \ref{rem:varinc}, it satisfies \inc{1}.}
{ Conditions \eqref{eq:asympright} imply
\begin{equation}
\lim_{t\to0^+} \frac{\varphi(t)}{t}=0\,,\quad \lim_{t\to +\infty} \frac{\varphi(t)}{t}=+\infty\,.
\label{eq:asympright2}
\end{equation}
It can be shown that an equivalent definition of $N$-function is the following: a continuous convex function $\varphi$ is called an $N$-function if it satisfies \eqref{eq:asympright2}. }

{For our purposes, we need functions $\varphi$ to depend also on the spatial variable $x$. }
{\begin{definition}\label{generalizedfunct}
Let $\varphi:\Omega\times[0,\infty)\to [0,\infty]$. We call 
$\varphi$ a \textit{generalized} weak $\Phi$-function (resp., convex $\Phi$-function, $N$-function) if
\begin{enumerate}
\item[(1)] $x\mapsto \varphi(x,|f(x)|)$ is measurable for every $f\in L^0(\Omega)$; 
\item[(2)] $t\mapsto \varphi(x,t)$ is a weak $\Phi$-function (resp., a convex $\Phi$-function, an $N$-function) for every $x\in\Omega$. 
\end{enumerate}
We write $\varphi\in\Phiw(\Omega)$, $\varphi\in\Phic(\Omega)$ and $\varphi\in N(\Omega)$, respectively. If $\varphi$ does not depend on $x$, we will adopt the shorthands $\varphi\in\Phiw$, $\varphi\in\Phic$ and $\varphi\in N$, respectively. For the right-derivative of a generalized convex $\Phi$-function, we will use the notation $\varphi_t$ in place of $\varphi'$.
\end{definition} }

{For a bounded function $\varphi:\Omega\times[0,+\infty)\to[0,+\infty)$ and a ball $B_r(x_0)\subset\Omega$ we define, for every $t\ge 0$,
\begin{equation}\label{phimeno}
\varphi_{r,x_0}^-(t):=\inf_{x\in B_r(x_0)}\varphi(x,t)\quad\hbox{ and }\quad \varphi_{r,x_0}^+(t) :=  \sup_{x\in B_r(x_0)}\varphi(x,t).
\end{equation}}

{Following the terminology of \cite{HH}, we give the following definitions. The first three ones concern with the regularity of $\varphi$ with respect to the $t$- variable, while the last one is a continuity assumption with respect to the spatial variable $x$. 
\begin{definition}
Let $p,q>0$. A function $\varphi:\Omega\times[0,+\infty)\to[0,+\infty)$ satisfies
\begin{itemize}
\item[\normalfont(inc)$_p$]\label{inc} 
 if $t\in(0,+\infty)\mapsto\frac{\varphi(x,t)}{t^p}$ is increasing for every $x\in\Omega$
\item[\normalfont(dec)$_q$]\label{dec} if $t\in (0,+\infty)\mapsto\frac{\varphi(x,t)}{t^q}$ is decreasing for every $x\in\Omega$
\item[\normalfont(A0)] \label{azero} if there exists $L\ge 1$ such that $\frac{1}{L}\le \varphi (x,1)\le L$ for every $x\in\Omega$
\item[\normalfont(VA1)] \label{va1} if there exists an increasing continuous function $\omega:[0,+\infty)\to [0,1]$ with $\omega(0)=0$ such that, for any ball $B_r(x_0)\subset\Omega$,
\[
\varphi^+_{r,x_0}(t)\le (1+\omega(r))\varphi^-_{r,x_0}(t), \quad\forall t>0 \,\, \mbox{ such that } \,\,\varphi^-_{r,x_0}(t)\in \left [\omega(r), \frac{1}{\mathcal{L}^d(B_r(x_0))} \right].
\]
\end{itemize}
\end{definition} }

{\begin{remark}
If $\varphi$ satisfies \inc{p} (resp., \dec{q}) for some $p>0$ (resp., $q>0$), then so do $\varphi^+_{r,x_0}$ and $\varphi^-_{r,x_0}$ for any $B_{r}(x_0)\subset\Omega$.
\label{rem:scaledvarphi}
\end{remark}}

{\begin{remark} \label{rem:varinc}
If $\varphi:[0,+\infty)\to[0,+\infty)$ is convex and $\varphi(x,0)=0$ for every $x\in\Omega$, then $\varphi$ satisfies \inc{1}. If $\varphi$ satisfies \inc{p_1}, then it satisfies \inc{p_2} for every $0<p_2\leq p_1$. If $\varphi$ satisfies \dec{q_1}, then it satisfies \dec{q_2} for every $q_2\geq q_1$.
\end{remark}}

Next simple results {can be found in \cite[Section 3]{HastoOk}.}

{\begin{proposition}\label{prop:properties}
Let $1<p\le q<+\infty$ and $\varphi\in \Phic(\Omega)$ with right derivative $\varphi_t$. Assume that $\varphi_t$ satisfies \inc{p-1} and \dec{q-1}. Then
\begin{enumerate}
\item[$(i)$] $\varphi$ satisfies \inc{p} and \dec{q}, and the following estimate hold:
\begin{equation}\label{cons2}
\varphi(x,s)\min\{t^p,t^q\}\le\varphi(x,ts)\le\max\{t^p,t^q\}\varphi(x,s),\quad \forall x\in\Omega, \,\, \forall s,t\in[0,+\infty).
\end{equation}
\item[(ii)] $\varphi(x,t)$ and $t\varphi_t(x,t)$ are equivalent, in the sense that \begin{equation}\label{cons1}
p\,\varphi(x,t)\le t\,\varphi_t(x,t)\le q\,\varphi(x,t),\quad \forall (x,t)\in\Omega\times[0,+\infty);
\end{equation}
\item[(iii)] if, in addition, $\varphi_t$ complies with \azero{}, then also $\varphi$ does with constants depending on $L,p,q$. More precisely, 
\begin{equation}\label{v0phi}
\frac{1}{Lq}\le\varphi(x,1)\le \frac{L}{p}, \ \ \ \forall x\in\Omega.
\end{equation}
\end{enumerate}
If, in addition, $\varphi(x, \cdot )\in C^1([0,+\infty))$ for every $x\in\Omega$, then $\varphi\in N(\Omega)$.
\end{proposition}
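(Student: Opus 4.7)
The three statements will follow swiftly from the absolute-continuity representation $\varphi(x,t) = \int_0^t \varphi_t(x,s)\,\mathrm{d}s$, available because $\varphi(x,\cdot) \in \Phic$, together with the monotonicity of $s \mapsto \varphi_t(x,s)/s^{p-1}$ (from \inc{p-1}) and $s \mapsto \varphi_t(x,s)/s^{q-1}$ (from \dec{q-1}). My plan is to establish (ii) first, then to deduce (i), and finally (iii) together with the concluding $N$-function claim.

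For (ii), I would insert the pointwise bound $\varphi_t(x,s) \le (s/t)^{p-1}\varphi_t(x,t)$, valid for $0 < s \le t$ by \inc{p-1}, into the integral to obtain
\[
\varphi(x,t) \le \frac{\varphi_t(x,t)}{t^{p-1}}\int_0^t s^{p-1}\,\mathrm{d}s = \frac{t\varphi_t(x,t)}{p},
\]
which is the left half of \eqref{cons1}. Symmetrically, \dec{q-1} gives $\varphi_t(x,s) \ge (s/t)^{q-1}\varphi_t(x,t)$ and hence $t\varphi_t(x,t) \le q\varphi(x,t)$.

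For (i), I would prove \inc{p} and \dec{q} for $\varphi$ by rescaling. For $0 < s < t$, set $\lambda := t/s$ and change variable $u = \lambda v$ in $\varphi(x,t) = \int_0^t \varphi_t(x,u)\,\mathrm{d}u$: the inequality $\varphi_t(x,\lambda v) \ge \lambda^{p-1}\varphi_t(x,v)$ from \inc{p-1} yields $\varphi(x,t) \ge \lambda^p \varphi(x,s)$, i.e., $\varphi(x,t)/t^p \ge \varphi(x,s)/s^p$. The analogous computation using \dec{q-1} produces $\varphi(x,t)/t^q \le \varphi(x,s)/s^q$. The two-sided scaling \eqref{cons2} is then a mere rephrasing: for $t \ge 1$, applying \inc{p} to $s \le ts$ gives $\varphi(x,ts) \ge t^p \varphi(x,s)$ while \dec{q} yields $\varphi(x,ts) \le t^q \varphi(x,s)$; the case $0 < t \le 1$ is symmetric.

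Part (iii) is then immediate from (ii) at $t = 1$: the sandwich $p\varphi(x,1) \le \varphi_t(x,1) \le q\varphi(x,1)$ combined with \azero{} for $\varphi_t$ produces the bounds in \eqref{v0phi}. For the concluding $N$-function claim under the extra $C^1$ hypothesis, I would verify the items of Definition \ref{Nfunc} pointwise in $x$: $\varphi_t(x,\cdot)$ is the classical derivative, hence continuous and non-decreasing by convexity; $\varphi_t(x,0) = 0$ follows from \inc{p-1} together with $p > 1$; positivity of $\varphi_t(x,s)$ for $s > 0$ comes from \azero{} and \dec{q-1}, which force $\varphi_t(x,s) \gtrsim s^{q-1}$ on $(0,1]$ and $\varphi_t(x,s) \gtrsim s^{p-1}$ on $[1,+\infty)$; finally $\varphi_t(x,s) \to +\infty$ as $s \to +\infty$ follows from \eqref{cons1} combined with the divergence of $\varphi(x,t)$ built into the definition of $\Phic$. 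No conceptual obstacle is anticipated; the only care is to justify the integration and differentiation when $\varphi_t$ is merely right-continuous, which is standard since $\varphi(x,\cdot)$ is absolutely continuous on compact subintervals of $(0,+\infty)$.
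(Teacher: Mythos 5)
Your proof is correct. Since the paper itself gives no proof of this proposition (it simply refers to \cite[Section 3]{HastoOk}), there is nothing to compare against; but the route you take — the absolute-continuity representation $\varphi(x,t)=\int_0^t\varphi_t(x,s)\,\mathrm{d}s$ combined with the monotonicity of $s\mapsto\varphi_t(x,s)/s^{p-1}$ and $s\mapsto\varphi_t(x,s)/s^{q-1}$ — is the standard one and each step checks out: the two halves of \eqref{cons1} follow from inserting the pointwise bounds into the integral, \inc{p} and \dec{q} for $\varphi$ follow from the change of variables, \eqref{cons2} is then a rephrasing split over $t\gtrless 1$, and \eqref{v0phi} is \eqref{cons1} at $t=1$. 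One minor remark on the concluding $N$-function claim: positivity of $\varphi_t(x,s)$ on $(0,\infty)$ does not actually require \azero{}. Indeed, since $\varphi(x,\cdot)$ is a convex $\Phi$-function, $\lim_{t\to\infty}\varphi(x,t)=+\infty$ forces $\varphi_t(x,s_0)>0$ for some $s_0$; \inc{p-1} then propagates positivity to $s\ge s_0$ and \dec{q-1} to $s<s_0$. This is worth noting because the placement of the final sentence in the statement leaves it slightly ambiguous whether \azero{} is still assumed, and the \azero{}-free argument covers both readings. Your argument is otherwise complete and uses only what the statement supplies.
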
 
}

{
For $\varphi\in\Phiw(\Omega)$, the \textit{generalized Orlicz space} is defined by 
\[
L^{\varphi}(\Omega):=\big\{f\in L^0(\Omega):\|f\|_{L^\varphi(\Omega)}<\infty\big\}
\] 
with the (Luxemburg) norm 
\[
\|f\|_{L^\varphi(\Omega)}:=\inf\bigg\{\lambda >0: \varrho_{\varphi}\Big(\frac{f}{\lambda}\Big)\leq 1\bigg\},
\ \ \text{where}\ \ \varrho_{\varphi}(f):=\int_\Omega\varphi(x,|f(x)|)\,\mathrm{d}x.
\]
We denote by $W^{1,\varphi}(\Omega)$ the set of $f\in L^{\varphi}(\Omega)$ satisfying that $\partial_1f,\dots,\partial_df \in L^{\varphi}(\Omega)$, where $\partial_if$ is the weak derivative of $f$ in the $x_i$-direction, with the norm $\|f\|_{W^{1,\varphi}(\Omega)}:=\|f\|_{L^\varphi(\Omega)}+\sum_i\|\partial_if\|_{L^\varphi(\Omega)}$. Note that if $\varphi$ satisfies \dec{q} for some $q\ge 1$, then $f\in L^\varphi(\Omega)$ if and only if $\varrho_\varphi(f)<\infty$, and if $\varphi$ satisfies \azero{}, \inc{p} and \dec{q} for some $1<p\leq q$, then $L^\varphi(\Omega)$ and $W^{1,\varphi}(\Omega)$ are reflexive Banach spaces. In addition we denote by $W^{1,\varphi}_0(\Omega)$ the closure of $C^\infty_0(\Omega)$ in $W^{1,\varphi}(\Omega)$.}


%


\noindent We also need the following definitions {and results about the maximal operator in Orlicz spaces (see \cite[Section 4.3]{HH})}.

\begin{definition}\label{maximal}
Given an open set $\Omega\subseteq\R^d$ and $f\in L^1_{\rm loc}(\Omega)$, the \emph{(centered) Hardy-Littlewood maximal operator} is $Mf:\Omega\to[0,\infty]$ defined as
\begin{equation}
    Mf(x) := \sup_{\rho>0}\frac{1}{\mathcal{L}^d(B_\rho(x))}\int_{B_\rho(x)\cap \Omega}|f(y)|\,\d y.
\label{eq:maximop}
\end{equation}
\noindent Analogously, for $\nu$ a positive, finite Radon measure in $\R^d$ one can define
\begin{equation*}
M\nu(x):= \sup_{\rho>0} \frac{\nu(B_\rho(x))}{\mathcal{L}^d(B_\rho)}\,,\quad x\in \R^d\,.
\end{equation*}
\end{definition}

\noindent As a consequence of the Besicovitch covering theorem (see, e.g., \cite{EG}), it can be shown that
\begin{equation*}
\mathcal{L}^d(\{x\in\R^d:\,\, M\nu(x)>\lambda\}) \leq \frac{c}{\lambda} \nu(\R^d)
\label{eq:2.3dclv}
\end{equation*}
for a constant $c$ depending only on $d$.

\noindent
It is also well-known that the maximal operator is bounded on $L^p$, for $p>1$. Next theorem shows that maximal operator is bounded {on $L^\varphi$} provided the weak $\Phi$-function {$\varphi$} satisfies \inc{p} { for some $p>1$}.

\begin{proposition} \label{prop:boundedmaxop}
Let an open set $\Omega\subseteq\R^d$ and $\varphi\in\Phiw$ be given. If $p>1$ and $\varphi$ satisfies \inc{p}, then there exists a $\mu>0$ such that
\[
    \varphi(\mu\, Mf(x))^\frac{1}{p}{\leq} M\left(\varphi(|f|)^\frac{1}{p}\right)(x)
\]
for every ball $B$, $x\in B\cap \Omega$, and $f\in L^{\varphi}(\Omega)$ satisfying $\displaystyle{\int_\Omega\varphi(|f|)\,\d x\le1}$.
\end{proposition}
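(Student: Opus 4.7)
My plan is to prove the inequality first on each individual ball and then pass to the supremum. The key pointwise consequence of \inc{p} is that, for any $0 < \alpha \le t$, the relation $\varphi(\alpha)/\alpha^p \le \varphi(t)/t^p$ rearranges to
\[
t \,\le\, \alpha\,\frac{\varphi(t)^{1/p}}{\varphi(\alpha)^{1/p}}.
\]

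First I would fix a ball $B = B_\rho(x_0)$ with $x_0 \in B \cap \Omega$, set $g := |f|$, fix a threshold $\alpha > 0$, and denote $m_B(h) := \frac{1}{|B|}\int_{B\cap\Omega} h\,\d y$. Splitting the mean of $g$ according to $\{g \le \alpha\}$ and $\{g > \alpha\}$, and applying the pointwise bound above on the second piece, I get
\[
m_B(g) \,\le\, \alpha \,+\, \frac{\alpha}{\varphi(\alpha)^{1/p}}\, m_B\bigl(\varphi(g)^{1/p}\bigr).
\]
I would then optimize by choosing $\alpha = \tfrac{1}{2}\,m_B(g)$: rearranging cancels the $\alpha$ term and yields the ball-level estimate
\[
\varphi\bigl(\tfrac{1}{2}\,m_B(g)\bigr)^{1/p} \,\le\, m_B\bigl(\varphi(g)^{1/p}\bigr) \,\le\, M\bigl(\varphi(|f|)^{1/p}\bigr)(x_0).
\]
To convert this into a bound for $Mf(x_0)$, for any $\epsilon \in (0,1)$ I would pick a ball $B$ with $m_B(g) \ge (1-\epsilon)\,Mf(x_0)$; by monotonicity of $\varphi$,
\[
\varphi\!\Bigl(\tfrac{1-\epsilon}{2}\,Mf(x_0)\Bigr)^{\!1/p} \,\le\, M\bigl(\varphi(|f|)^{1/p}\bigr)(x_0),
\]
and the claim follows by fixing any $\mu < 1/2$ (say $\mu = 1/3$) and taking $\epsilon$ small enough that $\mu \le (1-\epsilon)/2$.

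The main, and essentially only, subtlety will lie in the final step: a generic weak $\Phi$-function need not be left-continuous, so the identity $\varphi(\mu\sup_\rho a_\rho)^{1/p}=\sup_\rho\varphi(\mu a_\rho)^{1/p}$ cannot be taken for granted. I circumvent this by allowing a small loss, taking $\mu$ strictly less than $1/2$, so that monotonicity alone suffices. The hypothesis $\int_\Omega\varphi(|f|)\,\d x \le 1$, combined with $p>1$, guarantees $\varphi(|f|)^{1/p}\in L^p(\Omega)$ and hence, by the classical Hardy--Littlewood maximal theorem, finiteness of $M(\varphi(|f|)^{1/p})$ almost everywhere, which is what makes the pointwise inequality substantive on the relevant set.
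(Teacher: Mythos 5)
Your proof is correct, and it takes a genuinely different route from the paper. The paper does not argue the proposition directly: it simply invokes \cite[Corollary~4.3.3]{HH}, observing in the attached remark that in that formulation the constant $\mu$ depends on $\varphi$ through $\varphi^{-1}(1)$. You instead give a short, self-contained argument: split the mean $m_B(|f|)$ at a threshold $\alpha$, bound the super-level part using \inc{p} rewritten as $t\le \alpha\,\varphi(t)^{1/p}/\varphi(\alpha)^{1/p}$, choose $\alpha=\tfrac12 m_B(|f|)$ to cancel the linear term, and then pass to the supremum over balls, deliberately sacrificing a constant (taking $\mu<1/2$) to avoid needing left-continuity of $\varphi$. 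This is a sound strategy and the subtlety you flag about the supremum is exactly the right one. Two small loose ends are worth noting, though both are harmless: if $\varphi\bigl(\tfrac12 m_B(|f|)\bigr)=0$ the division in the pointwise bound is not legitimate, but then the ball-level conclusion $\varphi\bigl(\tfrac12 m_B(|f|)\bigr)^{1/p}\le m_B\bigl(\varphi(|f|)^{1/p}\bigr)$ holds trivially, so one should just treat that case separately; and if $Mf(x_0)=+\infty$ the final $\epsilon$-argument does not directly apply, but the ball-level estimate together with $\lim_{t\to\infty}\varphi(t)=+\infty$ forces $M\bigl(\varphi(|f|)^{1/p}\bigr)(x_0)=+\infty$, so the inequality again holds. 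What your approach buys, beyond being elementary, is a \emph{universal} constant $\mu$ (e.g.\ $\mu=1/3$) depending on nothing, and you do not even use the normalization $\int_\Omega\varphi(|f|)\,\mathrm{d}x\le 1$; the paper's cited result, tailored to $x$-dependent $\varphi$, produces a $\varphi$-dependent $\mu$ and needs the normalization. So for the $x$-independent case at hand your argument is arguably cleaner and slightly sharper than what the citation provides.
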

\begin{remark}
{Proposition \ref{prop:boundedmaxop} is a particular case of ~\cite[Corollary 4.3.3]{HH}, since $\varphi$ does not depend on the $x$ variable. Inspecting its proof one can say that the constant $\mu$ depends on $\varphi$ in terms of $\varphi^{-1}(1)$. }
\end{remark}

\begin{corollary}\label{C:MaxOpBnd}
Let $\varphi\in\Phiw$ be satisfying \inc{p} and \dec{q}, with $1<p\le q<+\infty$. Then there exists ${C}={C}(\varphi^{-1}(1),d,p,q)$ such that
\[
\int_\Omega \varphi(Mf)\,\d x\le {C} \int_\Omega \varphi(|f|)\,\d x
\]
for every  $f\in L^{\varphi}(\Omega)$ satisfying $\displaystyle{\int_\Omega\varphi(|f|)\,\d x\le1}$.
\end{corollary}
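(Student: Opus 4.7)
The plan is to chain the pointwise bound of Proposition~\ref{prop:boundedmaxop} with the classical $L^p$-boundedness of the Hardy--Littlewood maximal operator, and then absorb the scaling factor using \dec{q}.

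Without loss of generality I would assume that the constant $\mu$ supplied by Proposition~\ref{prop:boundedmaxop} satisfies $\mu\le 1$: the statement remains valid if $\mu$ is replaced by any smaller positive number, since $\varphi$ is nondecreasing and hence the left-hand side only decreases. Raising the resulting inequality to the $p$-th power yields the pointwise estimate
\[
\varphi(\mu\, Mf(x)) \le \bigl[M(\varphi(|f|)^{1/p})(x)\bigr]^p
\]
for every $x\in\Omega$, still under the normalisation $\int_\Omega\varphi(|f|)\,\d x\le 1$.

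Next, I would integrate over $\Omega$ and apply the classical Hardy--Littlewood maximal theorem to $g:=\varphi(|f|)^{1/p}$, which belongs to $L^p(\Omega)$ because $\int_\Omega |g|^p\,\d x=\int_\Omega\varphi(|f|)\,\d x\le 1$ and because $p>1$. This gives
\[
\int_\Omega \varphi(\mu\, Mf)\,\d x \le \int_\Omega \bigl[M(\varphi(|f|)^{1/p})\bigr]^p\,\d x \le C(d,p)\int_\Omega \varphi(|f|)\,\d x,
\]
where $C(d,p)$ collects the dimensional and exponent-dependent constants from the strong-type bound for the maximal operator.

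Finally, to pass from $\varphi(\mu Mf)$ to $\varphi(Mf)$, I would exploit that \inc{p} together with \dec{q} give the two-sided scaling
\[
\min\{t^p,t^q\}\,\varphi(s)\le \varphi(ts)\le \max\{t^p,t^q\}\,\varphi(s), \qquad s,t\ge 0,
\]
which follows directly from the monotonicity of $\varphi(t)/t^p$ and $\varphi(t)/t^q$. Choosing $t=\mu^{-1}\ge 1$ and $s=\mu Mf(x)$ yields $\varphi(Mf(x))\le \mu^{-q}\varphi(\mu Mf(x))$, so integration produces the asserted inequality with $C:=\mu^{-q} C(d,p)$. Since the remark following Proposition~\ref{prop:boundedmaxop} ensures that $\mu$ depends only on $\varphi^{-1}(1)$ (together with $d$ and $p$ from its proof), the final constant depends only on $\varphi^{-1}(1),d,p,q$, exactly as required. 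There is no substantive obstacle in this argument; the only delicate point is the initial reduction $\mu\le 1$, needed so that $\mu^{-1}\ge 1$ falls into the right branch of the two-sided scaling inequality.
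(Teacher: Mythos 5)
Your proof is correct and follows essentially the same route as the paper's (very terse) one-line argument: apply Proposition~\ref{prop:boundedmaxop}, use the strong $(p,p)$ bound for the Hardy--Littlewood maximal operator on $g=\varphi(|f|)^{1/p}$, and absorb the constant $\mu$ via the two-sided scaling inequality implied by \inc{p} and \dec{q}. The only cosmetic difference is your preliminary reduction to $\mu\le1$; the paper instead keeps the factor $\max\{\mu^{-p},\mu^{-q}\}$, which makes that reduction unnecessary.
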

\begin{proof}
Noticing that 
$\varphi(t)\le \varphi(\mu t)\max\{\frac{1}{\mu^p},\frac{1}{\mu^q}\}$, the previous proposition produces the result.
\end{proof}

\subsection{$BV$ and $SBV$ functions}\label{s:bv}

For a general survey on the spaces of $BV$ and $SBV$ functions 
we refer for instance to \cite{Ambrosio-Fusco-Pallara:2000}. Below, we just recall some basic definitions useful in the sequel. 

If  $u\in L^1_{\rm loc}(\Omega)$ and $x\in\Omega$, the {\it precise representative of $u$ at $x$} is defined
as the unique value $\widetilde{u}(x)\in\R$ such that
$$
\lim_{\rho\to 0^+} \frac{1}{\rho^d}\int_{{B_\rho(x)}}\!|u(y)-\widetilde{u}(x)|\,\d x=0\,.
$$
The set of points in $\Omega$ where the precise representative of $x$ is not defined is called the
{\it approximate singular set} of $u$ and denoted by $S_u$. We say that a point $x\in\Omega$ is an approximate jump point of $u$
if there exist $a,b\in\R$ and $\nu\in\mathbb{S}^{d-1}$, such that $a\not = b$ and
$$
\lim_{\rho\to 0^+}\dashint_{B^+_\rho(x,\nu)} |u(y)-a|\, \d y=0
\qquad{\rm and}\qquad
\lim_{\rho\to 0^+}\dashint_{B^-_\rho(x,\nu)} |u(y)-b|\, \d y=0
$$
where $B^\pm_\rho(x,\nu):= \{y\in B_\rho(x)\ :\ \langle y-x,\nu\rangle\gtrless0\}$.
The triplet $(a,b,\nu)$ is uniquely determined by the previous formulas, up to a permutation
of $a,b$ and a change of sign of $\nu$, and it is denoted by $(u^+(x),u^-(x),\nu_u(x))$.
The Borel functions $u^+$ and $u^-$  are called the {\it upper and
lower approximate limit} of $u$ at the point $x\in\Omega$. The set of approximate jump points of $u$ is denoted by $J_u\subseteq S_u$.

The space ${BV}(\Omega)$ of {\it functions of bounded variation} is defined as the set
of all $u\in L^1(\Omega)$ whose distributional gradient $Du$ is a bounded Radon measure on $\Omega$
with values in $\R^d$.  Moreover, the usual decomposition
\begin{equation*}
Du = \nabla u\,\mathcal{L}^d + D^c u + (u^+-u^-)\otimes \nu_u\,\mathcal{H}^{d-1}\ristretto{J_u}
\end{equation*}
\noindent
holds, where $\nabla u$ is the Radon-Nikod\'ym derivative of $Du$ with respect to the Lebesgue measure and $D^cu$ is  the {\it Cantor part} of $Du$. {If $u\in BV(\Omega)$, then $\nabla u(x)$ is the  \emph{approximate gradient} of $u$ for a.e. $x\in\Omega$:
\[
\lim_{\rho\to 0}\-int_{B_\rho(x)}\frac{|u(y)-u(x)-\nabla u(x)(y-x)|}{|y-x|}\,\mathrm{d}y=0\,.
\]}
For the sake of simplicity, we denote by $D^su =D^c u + (u^+-u^-)\otimes \nu_u\,\mathcal{H}^{d-1}\ristretto{J_u}$.
{{If $u\in{BV}(\Omega)$, then $\mathcal{H}^{d-1}(S_u\setminus J_u)=0$; so in the sequel we shall essentially identify the two sets.}}

We recall that the space ${SBV}(\Omega)$ of {\it special functions of bounded variation} is defined as the set
of all $u\in BV(\Omega)$ such that $D^su$ is concentrated on $S_u$; i.e., $|D^su|(\Omega\setminus S_u)=0$. Finally, for $p>1$ the space $SBV^p(\Omega)$ is the set of $u\in SBV(\Omega)$ with $\nabla u\in L^p(\Omega;\R^d)$ and $\mathcal{H}^{d-1}(S_u)<\infty$.

{In order to recall a Poincar\'e-Wirtinger inequality for $SBV$ functions with small jump set in a ball, we first fix some notation useful also in the sequel. With given $a$, $b\in\R$, we denote $a\wedge b:=\min(a,b)$ and $a\vee b:=\max(a,b)$. Let $B$ be a ball in $\R^d$. For every measurable function $u:B\to\R$, we set
\begin{equation*}
u_*(s;B):= \inf\{t\in\R:\,\, \mathcal{L}^d(\{u<t\}\cap B)\geq s\} \qquad \mbox{ for } 0\leq s \leq \mathcal{L}^d(B),
\end{equation*}
and
\begin{equation*}
\quad {\rm med}(u;B):=u_*\left(\frac{1}{2}\mathcal{L}^d(B);B\right).
\end{equation*}}

{For every $u\in SBV(\Omega)$ such that
\begin{equation*}
\left(2\gamma_{\rm iso}\mathcal{H}^{d-1}(S_u\cap B)\right)^{\frac{d}{d-1}} \leq \frac{1}{2}\mathcal{L}^d(B)\,,
\end{equation*}
we define
\begin{equation*}
\begin{split}
\tau'(u;B) & := u_*\left(\left(2\gamma_{\rm iso}\mathcal{H}^{d-1}(S_u\cap B)\right)^{\frac{d}{d-1}};B\right)\,, \\
\tau''(u;B) & := u_*\left(\mathcal{L}^d(B)-\left(2\gamma_{\rm iso}\mathcal{H}^{d-1}(S_u\cap B)\right)^{\frac{d}{d-1}};B\right)\,, 
\end{split}
\end{equation*}
and the truncation operator
\begin{equation}
T_Bu(x):= (u(x)\wedge \tau''(u;B)) \vee \tau'(u;B)\,,
\label{eq:truncated}
\end{equation}
where $\gamma_{\rm iso}$ is the dimensional constant in the relative isoperimetric inequality. }



{We are now in position to state the aforementioned Poincar\'e-Wirtinger inequality, due to De Giorgi-Carriero-Leaci (see \cite[Theorem~3.1]{DeGCarLea} for the original proof in the scalar setting, and \cite[Theorem~2.5]{CL} for the subsequent extension to vector-valued functions).}

{\begin{theorem}
Let $u\in SBV(B)$ and assume that
\begin{equation}
\left(2\gamma_{\rm iso}\mathcal{H}^{d-1}(S_u\cap B)\right)^{\frac{d}{d-1}} \leq \frac{1}{2}\mathcal{L}^d(B)\,.
\label{(10)bis}
\end{equation}
If $1\leq p < d$ then the function $T_Bu$ satisfies $|DT_Bu(B)|\le 2\int_B|\nabla u|\,\mathrm{d}y$, 
\begin{equation*}
\left(\int_B |T_Bu-{\rm med}(u;B)|^{p^*}\,\mathrm{d}x\right)^\frac{1}{p^*} \leq \frac{2\gamma_{\rm iso}p(d-1)}{d-p} \left(\int_B|\nabla u|^p\,\mathrm{d}x\right)^\frac{1}{p},
\label{(11)bis}
\end{equation*}
and
\begin{equation}
\mathcal{L}^d(\{T_Bu\neq u\}\cap B) \leq 2 \left(2\gamma_{\rm iso}\mathcal{H}^{d-1}(S_u\cap B)\right)^{\frac{d}{d-1}}\,,
\label{(12)bis}
\end{equation}
where $p^*:=\frac{dp}{d-p}$.
If $p\ge d$, then, for any $q\ge 1$,
\begin{equation*}\label{maggdbis}
{\left(\int_B |T_Bu-{\rm med}(u;B)|^{q}\,\mathrm{d}x\right)^\frac{1}{q} \leq c(q,N,\gamma_{\rm iso})(\mathcal{L}^d(B))^{\frac{1}{q}+\frac{1}{d}-\frac{1}{p}} \left(\int_B|\nabla u|^p\,\mathrm{d}x\right)^\frac{1}{p}\,.}
\end{equation*}
\label{thm:poincsbv}
\end{theorem}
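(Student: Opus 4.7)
My plan is to follow the classical De Giorgi--Carriero--Leaci truncation strategy, reducing the SBV estimate to a $BV$ Sobolev--Poincar\'e inequality after controlling how much ``perimeter'' the jumps can contribute at relevant levels.

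First, by the coarea formula for $BV$ functions and the explicit form of the level sets of $T_B u$ (which coincide with those of $u$ for $t\in(\tau'(u;B),\tau''(u;B))$, are empty for $t\ge \tau''$, and equal $B$ for $t<\tau'$),
\[
|DT_B u|(B) \;=\; \int_{\tau'(u;B)}^{\tau''(u;B)} P(\{u>t\},B)\,\mathrm{d}t.
\]
For $t$ in this range the defining property of $u_*$, together with \eqref{(10)bis}, yields that $\mathcal{L}^d(\{u>t\}\cap B)$ and $\mathcal{L}^d(B)-\mathcal{L}^d(\{u>t\}\cap B)$ are both bounded below by $\bigl(2\gamma_{\rm iso}\mathcal{H}^{d-1}(S_u\cap B)\bigr)^{d/(d-1)}$. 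The relative isoperimetric inequality then gives
\[
2\gamma_{\rm iso}\mathcal{H}^{d-1}(S_u\cap B) \;\le\; \gamma_{\rm iso} \,P(\{u>t\},B),
\]
so $\mathcal{H}^{d-1}(S_u\cap B)\le \tfrac{1}{2} P(\{u>t\},B)$.

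Next, splitting the reduced boundary for SBV functions as
\[
P(\{u>t\},B) = \mathcal{H}^{d-1}\bigl(\partial^*\{u>t\}\cap(B\setminus S_u)\bigr) + \mathcal{H}^{d-1}\bigl(\partial^*\{u>t\}\cap S_u\bigr),
\]
with the second term trivially dominated by $\mathcal{H}^{d-1}(S_u\cap B)$, the previous step shows that the jump part accounts for at most half. Integrating this in $t$ and applying the coarea formula to the absolutely continuous part of $Du$ yields $|DT_B u|(B)\le 2\int_B|\nabla u|\,\mathrm{d}y$, which is the first claim. The estimate \eqref{(12)bis} follows immediately from the definitions of $\tau'$ and $\tau''$, since $\{T_B u\ne u\}\cap B\subseteq \{u<\tau'\}\cup\{u>\tau''\}$ and each of these sets has Lebesgue measure at most $\bigl(2\gamma_{\rm iso}\mathcal{H}^{d-1}(S_u\cap B)\bigr)^{d/(d-1)}$ by construction.

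For the $L^{p^*}$ bound, observe that assumption \eqref{(10)bis} guarantees $\mathrm{med}(u;B)\in[\tau'(u;B),\tau''(u;B)]$ and hence equals $\mathrm{med}(T_B u;B)$. In the case $p=1$ the standard $BV$ Sobolev--Poincar\'e inequality with the median applied to $T_B u$, combined with the bound on $|DT_B u|(B)$, produces the claimed constant $2\gamma_{\rm iso}$. For $1<p<d$ the per-level isoperimetric estimate must be refined: for a.e.\ $t\in(\tau',\tau'')$ one applies the (non-relative) isoperimetric inequality to the sub-level and super-level sets of $T_B u$ shifted by the median, combined with the bound $P(\{u>t\},B)\le 2\mathcal{H}^{d-1}(\partial^*\{u>t\}\cap(B\setminus S_u))$ from above, and then uses the standard $W^{1,p}$-Sobolev proof (via the chain rule on $|T_B u - \mathrm{med}|^{p(d-1)/(d-p)}$ together with H\"older) to upgrade the $L^1$ gradient control into an $L^p$ one; the factor $2$ from the truncation propagates and the Sobolev constant $p(d-1)/(d-p)$ emerges from the customary iteration. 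For $p\ge d$, one combines H\"older's inequality with the $BV$ embedding into a suitable Lebesgue space and uses that on the truncated function the perimeter is already controlled by $\int_B|\nabla u|\,\mathrm{d}y$, from which the stated interpolation-type estimate follows.

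The most delicate step is the refinement needed for $p>1$: $T_B u$ generally retains jumps (those of $u$ whose endpoints both lie in $[\tau',\tau'']$), so one cannot simply invoke $W^{1,p}$ Sobolev. The crucial observation is that the level-wise bound $\mathcal{H}^{d-1}(\partial^*\{u>t\}\cap(B\setminus S_u))\ge \tfrac{1}{2} P(\{u>t\},B)$ holds for \emph{every} $t\in(\tau',\tau'')$, which allows the $BV$ chain rule applied to a power of $T_B u-\mathrm{med}(u;B)$ to be closed against $\int_B|\nabla u|^p\,\mathrm{d}y$ rather than $\int_B|\nabla u|\,\mathrm{d}y$, since at each level we gain the same factor of $2$ independently of the power.
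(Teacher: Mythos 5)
The paper does not actually prove Theorem~\ref{thm:poincsbv}: it is recalled verbatim with a citation to \cite[Theorem~3.1]{DeGCarLea} and \cite[Theorem~2.5]{CL}. Your sketch does reproduce the essential De Giorgi--Carriero--Leaci strategy that underlies those references: truncate at the levels $\tau',\tau''$ defined through $u_*$; use the relative isoperimetric inequality in $B$ together with \eqref{(10)bis} to show that, for every relevant level $t\in(\tau',\tau'')$, the jump contribution $\mathcal{H}^{d-1}(\partial^*\{u>t\}\cap S_u)$ is at most half of $P(\{u>t\},B)$; integrate via coarea to obtain $|DT_Bu|(B)\le 2\int_B|\nabla u|$; and then feed the same factor-of-$2$ gain at each level into the usual truncation/power/H\"older iteration to get the $L^{p^*}$ estimate. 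Two minor cautions. First, what you call the ``non-relative'' isoperimetric inequality is still the \emph{relative} one in $B$: choosing $\mathrm{med}(u;B)$ as the shift guarantees that every super- and sub-level set of $T_Bu-\mathrm{med}(u;B)$ occupies at most half of $B$, which is exactly what the relative inequality requires. Second, a naive $BV$ chain rule on $|T_Bu-\mathrm{med}(u;B)|^{\alpha}$ reintroduces a jump term; the clean route, consistent with your closing remark, is to apply the coarea formula to this power and insert the per-level bound before invoking H\"older, rather than to write a chain-rule identity and then correct it. With those adjustments (and the $p\ge d$ case obtained from the $p<d$ case by H\"older, as your sketch implicitly suggests), the argument is the one in \cite{DeGCarLea}.
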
}

{As a first application of Theorem~\ref{thm:poincsbv} one can obtain the following sufficient condition for the existence of the approximate limit at a given point (see \cite[Theorem 7.8]{Ambrosio-Fusco-Pallara:2000}).
\begin{theorem}
Let $u\in SBV_{\rm loc}(\Omega)$ and $x\in\Omega$. If there exist $p,q>1$ such that
\begin{equation*}
\lim_{\rho\to0} \frac{1}{\rho^{d-1}} \left[\int_{B_\rho(x)}|\nabla u|^p\,\mathrm{d}y + \mathcal{H}^{d-1}(S_u\cap B_\rho(x)) \right] =0 \quad \mbox{and} \quad \mathop{\lim\sup}_{\rho\to0} \dashint_{B_\rho(x)} |u(y)|^q \,\mathrm{d}y <\infty\,,
\end{equation*}
then $x\not\in S_u$.
\label{thm:thm7.8AFP}
\end{theorem}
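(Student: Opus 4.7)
The plan is to produce an approximate limit at $x$, i.e., a value $\ell \in \R$ such that $\dashint_{B_\rho(x)}|u - \ell|\,\mathrm{d}y \to 0$ as $\rho \to 0^+$, which by the very definition of $S_u$ implies $x \notin S_u$. The natural candidate is $\ell = \lim_{\rho \to 0} m_\rho$, with $m_\rho := \mathrm{med}(u; B_\rho(x))$. Since the first hypothesis forces in particular $\hd(S_u \cap B_\rho(x)) = o(\rho^{d-1})$, condition \eqref{(10)bis} holds for all sufficiently small $\rho$ and Theorem~\ref{thm:poincsbv} applies at scale $\rho$.

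The first key estimate is $\dashint_{B_\rho(x)}|u - m_\rho|\,\mathrm{d}y \to 0$. Splitting the integral over $G_\rho := \{T_{B_\rho(x)}u = u\} \cap B_\rho(x)$ and its complement $E_\rho := B_\rho(x) \setminus G_\rho$, H\"older's inequality combined with the Sobolev--Poincar\'e bound of Theorem~\ref{thm:poincsbv} yields, treating $p < d$ (the case $p \ge d$ being analogous via the alternative estimate therein),
\[
\dashint_{B_\rho(x)}\chi_{G_\rho}|u - m_\rho|\,\mathrm{d}y \;\lesssim\; \rho^{1 - d/p}\Big(\int_{B_\rho(x)}|\nabla u|^p\,\mathrm{d}y\Big)^{1/p} = o\big(\rho^{(p-1)/p}\big).
\]
For the $E_\rho$-part, the measure estimate \eqref{(12)bis} gives $\Ld(E_\rho) \le C(\hd(S_u \cap B_\rho(x)))^{d/(d-1)} = o(\rho^d)$. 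The median definition together with the uniform $L^q$-bound on $u$ entails $|m_\rho| \le \big(2\,\dashint_{B_\rho(x)}|u|^q\,\mathrm{d}y\big)^{1/q}$, uniformly bounded for small $\rho$, so by H\"older's inequality with exponents $q$ and $q/(q-1)$ the second piece is controlled by $C\,(\Ld(E_\rho)/|B_\rho(x)|)^{1-1/q} = o(1)$.

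Next, I would promote this to convergence of $m_\rho$ through the rescaled functions $u_\rho(y) := u(x + \rho y)$ on $B_1$. Direct scaling shows $\|\nabla u_\rho\|_{L^p(B_1)}^p = \rho^{p-d}\int_{B_\rho(x)}|\nabla u|^p\,\mathrm{d}y \to 0$ (since $p > 1$), $\hd(S_{u_\rho}\cap B_1) = \rho^{1-d}\hd(S_u\cap B_\rho(x)) \to 0$, and $\|u_\rho\|_{L^q(B_1)}$ remains bounded. By the $SBV$ compactness-closure theorem of Ambrosio, any sequence $\rho_n \to 0$ admits a subsequence along which $u_{\rho_n}\to v$ in $L^1(B_1)$ with $v \in SBV(B_1)$, $\nabla v = 0$, and $\hd(S_v) = 0$ (the latter by lower semicontinuity of the surface term); hence $Dv = 0$ and $v$ is a constant. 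Combined with $u_\rho - m_\rho \to 0$ in $L^1(B_1)$ from the first step, this constant must coincide with the corresponding subsequential limit of $m_\rho$.

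The main obstacle is the uniqueness of the accumulation point of $\{m_\rho\}$. Assuming towards contradiction two accumulation points $\ell_1 < \ell_2$ attained along $\rho_n, \rho'_n \to 0$, I fix $c \in (\ell_1, \ell_2)$ and consider $h(\rho) := \rho^{-d}\,\Ld(\{u > c\} \cap B_\rho(x))$, which is continuous in $\rho > 0$. The subsequential convergence $u_{\rho_n}\to\ell_1$ in $L^1(B_1)$ forces $h(\rho_n) \to 0$, and analogously $h(\rho'_n) \to |B_1|$. The intermediate value theorem, applied on the interval with endpoints $\rho_n$ and $\rho'_n$, delivers $\tilde\rho_n \to 0$ with $h(\tilde\rho_n) = |B_1|/2$. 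Running the compactness step along $\tilde\rho_n$ would then yield a further constant subsequential limit $\ell_3$, for which $h(\tilde\rho_n)$ should tend to $0$ or $|B_1|$ depending on whether $\ell_3 < c$ or $\ell_3 > c$, contradicting $|B_1|/2$. Consequently $m_\rho \to \ell$ for some $\ell \in \R$, $\dashint_{B_\rho(x)}|u - \ell|\,\mathrm{d}y \to 0$, and therefore $x \notin S_u$.
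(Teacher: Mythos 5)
Your first step (the Poincar\'e--H\"older estimate on $G_\rho$, the $E_\rho$ measure bound, the median bound $|m_\rho|\lesssim(\dashint|u|^q)^{1/q}$, and the resulting $\dashint_{B_\rho}|u-m_\rho|\to0$) is correct and essentially matches what the standard proof does. The blow-up/compactness step is also fine in spirit, though you should really run it on the truncations $T_{B_1}u_\rho-m_\rho$ so that the hypotheses of Ambrosio's closure theorem (or of Theorem~\ref{thm:3.5}) are met; the raw $u_\rho$ only comes with an $L^q$ bound, not an $L^\infty$ one.

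The genuine gap is in the uniqueness step. After producing, by the intermediate value theorem, a sequence $\tilde\rho_n\to0$ with $h(\tilde\rho_n)=|B_1|/2$, you extract a subsequential blow-up limit $\ell_3$ and argue that $h(\tilde\rho_n)$ should tend to $0$ or $|B_1|$ ``depending on whether $\ell_3<c$ or $\ell_3>c$.'' You never address the third alternative $\ell_3=c$, and there is no way to exclude it: $L^1(B_1)$ convergence of $u_{\tilde\rho_n}$ to the constant $c$ says nothing whatsoever about $\mathcal L^d(\{u_{\tilde\rho_n}>c\}\cap B_1)$, which can sit happily at $|B_1|/2$. This is not a cosmetic omission: the compactness machinery you invoke only uses that the rescaled Dirichlet energy and jump set go to zero, not the \emph{rate} at which they do so, and that soft information alone cannot pin down a unique blow-up limit. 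Indeed, if two accumulation points $\ell_1<\ell_2$ existed, your own argument would show that every $c\in(\ell_1,\ell_2)$ is also a constant subsequential limit, which is perfectly self-consistent and produces no contradiction.

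The proof given in \cite[Theorem~7.8]{Ambrosio-Fusco-Pallara:2000} (which is what the paper refers to) closes this gap with a quantitative, rate-exploiting argument. Denote $\alpha(\rho):=\big(\dashint_{B_\rho}|T_{B_\rho}u-m_\rho|^{p^*}\big)^{1/p^*}$; the Sobolev--Poincar\'e estimate of Theorem~\ref{thm:poincsbv} together with the hypothesis $\int_{B_\rho}|\nabla u|^p=o(\rho^{d-1})$ yields $\alpha(\rho)=o(\rho^{(p-1)/p})$. One then compares medians on nested balls: if $m_{\rho/2}>m_\rho+s$ with $s>0$, then $\mathcal L^d(\{u\ge m_\rho+s\}\cap B_{\rho/2})\ge|B_{\rho/2}|/2$, and after removing $E_\rho$ (whose measure is $o(\rho^d)$ and therefore eventually smaller than a fixed fraction of $|B_{\rho/2}|$) a Chebyshev estimate on $\{|T_{B_\rho}u-m_\rho|\ge s\}$ forces $s\lesssim\alpha(\rho)$. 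Hence $|m_\rho-m_{\rho/2}|\lesssim\alpha(\rho)=o(\rho^{(p-1)/p})$, which is geometrically summable over dyadic scales precisely because $p>1$. Thus the medians form a Cauchy family and converge to some $\ell$, and combining with your (correct) Step~1 gives $\dashint_{B_\rho}|u-\ell|\to0$, i.e.\ $x\notin S_u$. This is where the hypothesis $p>1$ (and the exponent on the jump term) is really used; your soft compactness argument, by discarding the rate, cannot recover this conclusion.
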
}

\subsection{The space $SBV^{\varphi}$. Sobolev-Poincar\'e inequality and Lusin-type approximation.} \label{sec:poincare}

We denote by $SBV^{\varphi}(\Omega)$ the set of functions $u\in SBV(\Omega)$ with $\nabla u\in L^{\varphi}(\Omega;\mathbb{R}^{d})$ and $\mathcal{H}^{d-1}(S_u)<+\infty$.

{A fundamental ingredient in the proof of our main result will be the following Sobolev-Poincar\'e inequality for $SBV^\varphi$-functions with small jump set, extending Theorem \ref{thm:poincsbv} to the Orlicz setting. 
The result below is a slight refinement of that proven in \cite[Theorem 5.7]{ars} for $r=1$. The case $r>1$ can be inferred with minor modifications, so we briefly sketch the proof. }

\begin{theorem}\label{thm:poincsbvphi}
Let $\varphi$ be a weak $\Phi$-function satisfying \inc{p} and \dec{q}, let $B$ be any ball and $u\in SBV^\varphi(B)$, and assume that
\begin{equation}
\left(2\gamma_{\rm iso}\mathcal{H}^{d-1}(S_u\cap B)\right)^{\frac{d}{d-1}} \leq  \frac{1}{2}\mathcal{L}^d(B)\,.
\label{(10)}
\end{equation}
Then the function $T_Bu$ satisfies $|DT_Bu(B)|\le 2\int_B|\nabla u|\,\mathrm{d}y$, 
\begin{equation}
\mathcal{L}^d(\{T_Bu\neq u\}\cap B) \leq 2 \left(2\gamma_{\rm iso}\mathcal{H}^{d-1}(S_u\cap B)\right)^{\frac{d}{d-1}},
\label{(12)}
\end{equation}
and {there exists $r\in(1,\frac{d}{d-1})$ such that}
\begin{equation}
\left(\int_B \varphi(|T_Bu-{\rm med}(u;B)|)^r\,\mathrm{d}x \right)^{\frac{1}{r}}\leq C \int_B\varphi(|\nabla u|)\,\mathrm{d}x,
\label{(11)}
\end{equation}
where 
$C=C(d,p,q)$.

\end{theorem}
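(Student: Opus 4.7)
\smallskip
\noindent\textbf{Plan of proof.} The first two conclusions---the total variation bound $|DT_Bu|(B)\le 2\int_B|\nabla u|\,\mathrm{d}y$ and the measure bound \eqref{(12)}---are independent of the growth function $\varphi$: they depend only on the quantile-truncation \eqref{eq:truncated} together with the smallness hypothesis \eqref{(10)}, and can be read off verbatim from the classical De Giorgi--Carriero--Leaci argument recorded in Theorem~\ref{thm:poincsbv}. All the real work thus goes into the Sobolev--Poincar\'e inequality \eqref{(11)}.

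My plan for \eqref{(11)} is to adapt \cite[Theorem~5.7]{ars}, where the statement is proved for $r=1$, and to squeeze out an extra factor $r>1$ via Hedberg's inequality. Set $v:=T_Bu-\mathrm{med}(u;B)$. The argument in \cite{ars} combines two ingredients that I would recycle unchanged: a pointwise Riesz-potential bound $|v(x)|\le C\,I_1(|DT_Bu|\mres B)(x)$ valid for a.e.\ $x\in B$, coming from the $BV$-Poincar\'e inequality centered at the median, and an isoperimetric-type absorption of the (small) jump part of $DT_Bu$ into the absolutely continuous part $|\nabla u|$---precisely the content of \eqref{(12)}. Together these yield $|v(x)|\le C\,I_1(|\nabla u|\chi_B)(x)$. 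To promote this to an $L^r$ bound with $r>1$, I would then apply Hedberg's inequality
\[
I_1(|\nabla u|\chi_B)(x) \le C\,\mathrm{diam}(B)^{1/d}\,\|\nabla u\|_{L^1(B)}^{1/d}\,M(|\nabla u|\chi_B)(x)^{1-1/d}\,,
\]
producing a strictly sub-linear power of the maximal function. After normalizing so that $\int_B\varphi(|\nabla u|)\,\mathrm{d}x\le 1$ and composing with $\varphi$ through the homogeneity estimates \eqref{cons2}, one obtains a pointwise bound of the form $\varphi(|v(x)|/C)\lesssim \varphi(M(|\nabla u|\chi_B)(x))^{r(1-1/d)}$ valid for any $r$ with $r(1-1/d)<1$, i.e.\ $r<d/(d-1)$. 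Integrating over $B$ and applying Corollary~\ref{C:MaxOpBnd} to the weak $\Phi$-function $t\mapsto\varphi(t)^{r(1-1/d)}$, which still satisfies \inc{p'} for some $p'>1$ because of the standing assumption $p>1$, then delivers \eqref{(11)} with a constant depending only on $d$, $p$, $q$.

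The hard part---the only place where the generalized Orlicz setting demands more than a literal transcription of \cite[Section~5]{ars}---is the book-keeping when composing $\varphi$ with a product of an $L^1$-norm and a maximal function: one must repeatedly invoke \eqref{cons2} together with \inc{p} and \dec{q} to make sure that all multiplicative constants depend only on $d$, $p$, $q$ (rather than on $\varphi$ itself), and that the rescaling step is reversible. An additional but routine point is that the ``bad set'' $\{T_Bu\ne u\}$ has measure controlled by the small-jump quantity in \eqref{(12)}, so its contribution to the $L^r$-norm on the left-hand side of \eqref{(11)} is absorbed into the gradient term, exactly as in \cite{ars}. No further structural obstruction appears beyond what is already handled there.
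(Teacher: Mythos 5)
The paper's proof is a pure rearrangement argument in the Cianchi--Fusco style: one verifies that the auxiliary pair
$\phi(t)=\varphi(t)^{\frac{d}{d-1}}t^{-\frac{1}{d-1}}$, $\psi(t)=\varphi(t)^r$
with $r\in(1,\frac{pd-1}{p(d-1)}]$ satisfies the hypotheses of \cite[Lemma~5.8]{ars} (namely $\phi\circ\psi^{-1}\to0$ at $0^+$, using concavity of $\varphi^{-1}$, and monotonicity of $\phi/\psi$, using \inc{p}), and then one repeats the quantile/rearrangement computation of \cite[Theorem~5.7]{ars}. Your plan takes a genuinely different route via $I_1$-potentials and Hedberg's inequality, but it has a concrete gap. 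You assert that the two ingredients of \cite{ars} are a pointwise bound $|v|\le C\,I_1(|DT_Bu|\mres B)$ and an ``absorption'' giving $|v|\le C\,I_1(|\nabla u|\chi_B)$; this mischaracterises \cite{ars}, whose argument is rearrangement-based and never produces a pointwise Riesz-potential domination by the absolutely continuous density. More importantly, that pointwise domination does not follow from what is known: $T_Bu$ retains a nontrivial jump set (the truncation removes only the quantile tails, not the interior jumps of $u$), so $DT_Bu$ has a genuinely singular part. What is true is the total-mass bound $|DT_Bu|(B)\le 2\int_B|\nabla u|$, which is a coarea/isoperimetric statement at the level of masses; it does not yield $I_1(|DT_Bu|\mres B)\lesssim I_1(|\nabla u|\chi_B)$ pointwise, nor does \eqref{(12)}, which only controls $\mathcal{L}^d(\{T_Bu\ne u\})$. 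The Hedberg route would then force you to integrate $\varphi\bigl(M(|DT_Bu|\mres B)\bigr)$, and the maximal function of the singular jump measure is only in $L^{1,\infty}$ --- it is not in $L^\varphi$ in general, so the estimate breaks down.

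A secondary issue: the claimed pointwise inequality $\varphi(|v(x)|/C)\lesssim\varphi\bigl(M(|\nabla u|\chi_B)(x)\bigr)^{r(1-1/d)}$ is dimensionally wrong; even for $\varphi(t)=t^p$ Hedberg gives $\varphi(|v|)\lesssim L^{p/d}\varphi(Mf)^{1-1/d}$ pointwise, and the exponent $r(1-1/d)$ only appears after raising to the $r$-th power before integrating. Finally, the normalisation $\int_B\varphi(|\nabla u|)\,\mathrm{d}x\le1$ is not innocuous in the generalized Orlicz setting: by \eqref{cons2} rescaling $u\mapsto\lambda u$ distorts both sides by factors ranging between $\lambda^p$ and $\lambda^q$, so one cannot reduce to the normalised case without tracking those constants carefully --- a further reason the paper's route, which avoids any rescaling of $u$, is the cleaner one here.
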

 
\proof
{Let $r\in(1,\frac{pd-1}{p(d-1)}]$ be fixed. In particular, this implies that $r<\frac{d}{d-1}$. } It will suffice to check that the functions
\begin{equation*}
\phi(t) := \varphi^\frac{d}{d-1}(t) t^{-\frac{1}{d-1}} \quad \mbox{ and } \quad \psi(t):=\varphi^r(t)
\end{equation*}
comply with the assumptions of \cite[Lemma~5.8]{ars}. Indeed, for every $0<t\leq1$, recalling that $\varphi^{-1}$ is concave, we have
\begin{equation*}
0\leq \phi(\psi^{-1}(t)) = \frac{t^\frac{d}{(d-1) r}}{[\varphi^{-1}(t^\frac{1}{r})]^\frac{1}{d-1}} \lesssim t^\frac{1}{r}\,,
\end{equation*}
whence $\lim_{t\to0^+}\phi(\psi^{-1}(t))=0$. Moreover, $t\to \frac{\phi(t)}{\psi(t)}$ is increasing, since 
\begin{equation*}
\frac{\phi(t)}{\psi(t)} = \left[\frac{\varphi(t)}{t^\frac{1}{d-r(d-1)}} \right]^{\frac{d}{d-1}-r}\,,
\end{equation*}
and $t\to\frac{\varphi(t)}{t^\frac{1}{d-r(d-1)}}$, with $0<\frac{1}{d-r(d-1)}\leq {p}$, is increasing by Remark~\ref{rem:varinc}. Then \eqref{(11)} can be inferred by arguing as in the proof of \cite[Theorem~5.7]{ars}, so we omit the details.
\endproof


{A consequence of Theorem~\ref{thm:poincsbvphi} is the following compactness result.

\begin{theorem}\label{thm:3.5}
Let $B\subset\Omega$ be a ball, and $\{\varphi_j\}_{j\in\mathbb{N}}\subset\Phiw$ be complying with \inc{p}, \dec{q}, and {\rm (A0)} ($\frac{1}{L}\le\varphi_j(1)\le L$).
{Let $r>1$ be the exponent of Theorem \eqref{thm:poincsbvphi}}. If $\{u_j\}_{j\in\mathbb{N}}\subset SBV^{\varphi_j}(B)$ is such that
\begin{equation}
\Lambda:=\sup_{j\in\mathbb{N}}\int_B \varphi_j(|\nabla u_j|)\,\mathrm{d}y < +\infty\,,\quad \lim_{j\to+\infty} \mathcal{H}^{d-1}(S_{u_j}\cap B)=0,
\label{eq:ebounded}
\end{equation}
then there exist a function $u_0\in W^{1,1}(B)$ and a subsequence (not relabeled) of $\{u_j\}$ such that
\begin{equation}\label{eq:claims}
\begin{split}
&{\rm (i)}  \ {\bar u}_j:=T_B{u}_j-{\rm med}(u_j;B) \to u_0 \quad  \mathcal{L}^d-\hbox{a.e. in $B$}\,,\\
&{\rm (ii)} \int_{B} \varphi_j(|\bar{u}_j|)^r\mathrm{d}y\le C\,, \,\,\,\, {\mbox{ where $C=C(d,p,q,\Lambda)>0$\,,}} \\   
&{\rm (iii)}  \,\, \nabla\bar{u}_j\rightharpoonup \nabla u_0\quad \hbox{weakly in $L^1(B)$}\,,\\
&{\rm (iv)}  \,\, {\mathcal L}^d(\{{\bar  u}_j\neq u_j\}\cap B_1)\le 2 \left(2\gamma_{\rm iso}\mathcal{H}^{d-1}(S_{u_j}\cap B_1)\right)^{\frac{d}{d-1}}\,.
\end{split}
\end{equation}
\end{theorem}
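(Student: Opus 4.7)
The plan is to apply Theorem~\ref{thm:poincsbvphi} individually to each $u_j$ (viable once $j$ is large enough that the smallness condition \eqref{(10)} is met), extract a limit via $BV$-compactness, and close via Ambrosio's $SBV$ theorem. The uniformity of all constants in $j$ is secured by the $\varphi_j$-independent lower bound $\varphi_j(t)\ge t^p/L$ for $t\ge 1$, a direct consequence of \azero{} and \inc{p}. Since $\mathcal{H}^{d-1}(S_{u_j}\cap B)\to 0$, condition \eqref{(10)} holds for all sufficiently large $j$, and for such $j$ Theorem~\ref{thm:poincsbvphi} immediately gives (iv) from \eqref{(12)} (keeping in mind $\nabla\bar u_j=\nabla T_Bu_j$ and $|D\bar u_j|(B)=|DT_Bu_j|(B)$, as the subtracted median is a constant), the bound $|D\bar u_j|(B)\le 2\int_B|\nabla u_j|\,\mathrm{d}y$, and conclusion (ii) via
\begin{equation*}
\Bigl(\int_B \varphi_j(|\bar u_j|)^r\,\mathrm{d}y\Bigr)^{1/r}\le C\int_B\varphi_j(|\nabla u_j|)\,\mathrm{d}y\le C\Lambda.
\end{equation*}

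By \inc{p} the map $t\mapsto \varphi_j(t)/t^p$ is non-decreasing, whence together with \azero{} one has $\varphi_j(t)\ge t^p/L$ for $t\ge 1$, uniformly in $j$. Splitting each integral on $\{|\nabla u_j|\le 1\}\cup\{|\nabla u_j|>1\}$, I obtain, for any measurable $E\subseteq B$ and any $\lambda\ge 1$,
\begin{equation*}
\int_B|\nabla u_j|\,\mathrm{d}y\le\mathcal{L}^d(B)+L\Lambda,\qquad \int_E|\nabla\bar u_j|\,\mathrm{d}y\le\lambda|E|+\frac{L\Lambda}{\lambda^{p-1}},
\end{equation*}
using $|\nabla\bar u_j|\le|\nabla u_j|$ for the second inequality. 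Since $p>1$, the latter estimate expresses equi-integrability of $\{|\nabla\bar u_j|\}$ in $L^1(B)$, while the former, combined with $|D\bar u_j|(B)\le 2\int_B|\nabla u_j|\,\mathrm{d}y$ and the $BV$ Sobolev--Poincar\'e inequality for functions with zero median (recall $\mathrm{med}(\bar u_j;B)=0$), shows that $\{\bar u_j\}$ is bounded in $BV(B)$.

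Compact embedding of $BV(B)$ into $L^1(B)$ then produces a not relabeled subsequence along which $\bar u_j\to u_0$ in $L^1(B)$ and almost everywhere, proving (i). Since $S_{\bar u_j}\subseteq S_{u_j}$, one has $\mathcal{H}^{d-1}(S_{\bar u_j}\cap B)\to 0$; combined with the equi-integrability of the approximate gradients, Ambrosio's $SBV$ closure theorem (\cite[Theorems~4.7 and 4.8]{Ambrosio-Fusco-Pallara:2000}) yields $u_0\in SBV(B)$, $\nabla\bar u_j\rightharpoonup\nabla u_0$ weakly in $L^1(B)$, and $\mathcal{H}^{d-1}(S_{u_0})=0$. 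The last fact forces the singular part of $Du_0$ to vanish, so $u_0\in W^{1,1}(B)$, giving (iii). The subtlest point of the argument is the uniformity in $j$ of all constants, which the hypothesis \azero{}, together with \inc{p} and $p>1$, is tailored to guarantee: the first secures a $j$-independent coercivity threshold, while $p>1$ is what upgrades mere $L^1$-boundedness of the gradients to equi-integrability.
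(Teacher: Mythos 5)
Your proof is correct and follows essentially the same route as the paper's (which is only a terse sketch citing Theorem~\ref{thm:poincsbvphi}, $BV$/$SBV$ compactness in \cite[Theorems~3.23 and 4.7]{Ambrosio-Fusco-Pallara:2000}, and \cite[Remark~7.6]{Ambrosio-Fusco-Pallara:2000}). You merely spell out the details: the $j$-independent lower bound $\varphi_j(t)\ge t^p/L$ for $t\ge 1$ from \azero{} and \inc{p}, the resulting $L^1$-bound and equi-integrability for $\nabla\bar u_j$, $BV$-compactness after Poincar\'e with zero median, and closure in $SBV$ to deduce $u_0\in W^{1,1}(B)$.
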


\begin{proof}
Noting that 
\begin{equation*}
\frac{1}{L} \min\{t^p,t^q\} \leq \varphi_j(t)\leq L \max\{t^p,t^q\}\,, 
\end{equation*}
the result is a straightforward  application of Theorem~\ref{thm:poincsbvphi}, compactness theorems in $BV$ and $SBV$ (see \cite[Theorem 3.23 and Theorem 4.7]{Ambrosio-Fusco-Pallara:2000}), and \cite[Remark 7.6]{Ambrosio-Fusco-Pallara:2000}. 
\end{proof}

We conclude this section with the following theorem, proved in \cite[Lemma~7.2]{ars},  {which is a Lusin-type approximation result in $SBV^\varphi$, as it }concerns the approximation of $SBV^\varphi$ functions with Lipschitz functions in the unit ball.
\begin{theorem}\label{Lusin}
{Let $\varphi\in\Phiw(\Omega)$ be satisfying \inc{p}.} For every $u\in SBV^\varphi(B_1)\cap L^\infty(B_1)$ and every $\lambda > 0$ there exists a Lipschitz function $u_\lambda : B_1\to\mathbb{R}$ satisfying {\rm Lip}$(u_\lambda) \le c\,\lambda$ with $c=c(d)$, such that $u_\lambda = u$ in
$\{M |Du|\le \lambda\}$ and
\[
{\mathcal L}^d(A\cap \{M |Du| > \lambda\})\le 2\frac{c}{\lambda}\|u\|_{L^\infty(B_1)}\mathcal{H}^{d-1}(S_u) + \frac{1}{\varphi_{B_1}^-(\lambda)}\int_{\{M|\nabla u|>\lambda\}\cap A}\varphi_{B_1}^-(M |\nabla u|)\,\d x,
\]
for any Borel set $A\subset B_1$, {where $M$ is introduced in Definition \ref{maximal}.}
\end{theorem}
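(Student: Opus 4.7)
The argument combines the classical Acerbi--Fusco / Liu--style Lipschitz truncation with an Orlicz Chebyshev estimate. The core ingredient is the standard $BV$ pointwise comparison
\begin{equation*}
|u(x)-u(y)|\le c(d)\bigl(M|Du|(x)+M|Du|(y)\bigr)|x-y|,
\end{equation*}
valid whenever $x,y\in B_1$ are Lebesgue points of $u$, and obtained by iterating a Poincar\'e inequality on a chain of dyadic balls joining $x$ to $y$. In particular, on the good set $E_\lambda:=\{M|Du|\le\lambda\}\cap B_1$, the precise representative of $u$ is $2c(d)\lambda$-Lipschitz. I would then extend this restriction to a Lipschitz function $u_\lambda:B_1\to\mathbb{R}$ with the same Lipschitz constant via McShane's extension theorem; the identity $u_\lambda=u$ on $E_\lambda$ and the bound $\mathrm{Lip}(u_\lambda)\le c\lambda$ follow at once.

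For the measure estimate I would split $|Du|$ according to the $SBV$ decomposition $|Du|=|\nabla u|\,\mathcal{L}^d+|u^+-u^-|\mathcal{H}^{d-1}\lfloor S_u$. Since $|u^+-u^-|\le 2\|u\|_{L^\infty(B_1)}$, subadditivity of the centered maximal operator yields
\begin{equation*}
M|Du|(x)\le M|\nabla u|(x)+2\|u\|_{L^\infty(B_1)}\,M\mu(x),\qquad \mu:=\mathcal{H}^{d-1}\lfloor S_u,
\end{equation*}
so that $\{M|Du|>\lambda\}\subseteq\{M|\nabla u|>\lambda/2\}\cup\{2\|u\|_{L^\infty(B_1)}\,M\mu>\lambda/2\}$. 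The weak-$(1,1)$ bound for the maximal function of a finite Radon measure, recalled in Section~\ref{sec: prel}, controls the $\mathcal{L}^d$-measure of the second piece by a constant multiple of $\tfrac{1}{\lambda}\|u\|_{L^\infty(B_1)}\mathcal{H}^{d-1}(S_u)$, which is the first summand in the statement. For the first piece, since $\varphi^-_{B_1}$ is non-decreasing, Chebyshev's inequality gives
\begin{equation*}
\mathcal{L}^d\bigl(\{M|\nabla u|>\lambda/2\}\cap A\bigr)\le\frac{1}{\varphi^-_{B_1}(\lambda/2)}\int_{\{M|\nabla u|>\lambda/2\}\cap A}\varphi^-_{B_1}(M|\nabla u|)\dx,
\end{equation*}
which is the second summand, up to absorbing the factor $\tfrac12$ via \inc{p} applied to $\varphi^-_{B_1}$ (recall Remark~\ref{rem:scaledvarphi}).

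The genuinely delicate step is not the Orlicz Chebyshev splitting, which is routine, but rather the pointwise Lipschitz estimate on $E_\lambda$ when $Du$ carries a singular jump part: one has to work consistently with the precise representative and verify that the chain-of-balls Poincar\'e argument still closes, which it does precisely because $M$ is applied to the full variation measure $|Du|$ rather than to $|\nabla u|$ alone. Some minor bookkeeping (the dyadic factor $\lambda$ versus $\lambda/2$, localisation to $B_1$, and the passage from $\varphi$ to its infimum $\varphi^-_{B_1}$) is then absorbed using the $\Phi$-function growth properties, yielding the stated inequality.
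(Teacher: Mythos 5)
The paper does not give its own proof here --- Theorem~\ref{Lusin} is quoted from \cite[Lemma~7.2]{ars} --- so the comparison is against the standard argument, which is indeed what you sketch: a pointwise maximal inequality on Lebesgue points, McShane extension from the good set $\{M|Du|\le\lambda\}$, and a measure estimate for the bad set after decomposing $|Du|=|\nabla u|\,\mathcal{L}^d+|D^su|$. The Lipschitz truncation part of your outline is sound and correctly accounts for the jump part by applying $M$ to the full variation measure.

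The concrete gap is in the last paragraph, where you assert that the dyadic factor $\lambda/2$ ``is absorbed using the $\Phi$-function growth properties, yielding the stated inequality.'' After your split you arrive at
\begin{equation*}
\mathcal{L}^d\bigl(\{M|\nabla u|>\lambda/2\}\cap A\bigr)\le\frac{1}{\varphi^-_{B_1}(\lambda/2)}\int_{\{M|\nabla u|>\lambda/2\}\cap A}\varphi^-_{B_1}(M|\nabla u|)\,\d x\,,
\end{equation*}
which cannot be upgraded to the stated second summand. First, \inc{p} goes the \emph{wrong} way: it gives $\varphi^-_{B_1}(\lambda/2)\le 2^{-p}\varphi^-_{B_1}(\lambda)$, hence $1/\varphi^-_{B_1}(\lambda/2)\ge 2^{p}/\varphi^-_{B_1}(\lambda)$; to replace $\varphi^-_{B_1}(\lambda/2)$ by $\varphi^-_{B_1}(\lambda)$ in the denominator one would need \dec{q} or a doubling/$\Delta_2$ condition, none of which is assumed in this theorem. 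Second, even granting doubling, the domain $\{M|\nabla u|>\lambda/2\}$ is strictly larger than $\{M|\nabla u|>\lambda\}$ and the extra piece $\{\lambda/2<M|\nabla u|\le\lambda\}$ is not dominated by the jump term, so the integral cannot be shrunk to the stated one. To reach the precise inequality one should instead split $A\cap\{M|Du|>\lambda\}$ according to whether $M|\nabla u|>\lambda$: on the first piece, Chebyshev at level $\lambda$ gives exactly the second summand, and the whole difficulty is then to bound $\mathcal{L}^d\bigl(A\cap\{M|Du|>\lambda\}\cap\{M|\nabla u|\le\lambda\}\bigr)$ by $\tfrac{2c}{\lambda}\|u\|_{L^\infty(B_1)}\mathcal{H}^{d-1}(S_u)$ alone. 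Your sketch does not address that estimate, and it is not a routine weak-$(1,1)$ application: at such a point one only knows $M|D^s u|>0$, not $M|D^s u|\gtrsim\lambda$, so a covering argument adapted to the radii witnessing $M|Du|>\lambda$ is required. That is the step where the cited proof does real work, and where your outline currently has a gap; what you do prove is a legitimate but quantitatively weaker version with integration over $\{M|\nabla u|>\lambda/2\}$, which would in fact suffice for the way the lemma is used later in the paper.
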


\subsection{Auxiliary results}\label{sec:altripreli}

{This section collects several supporting results used in the proof of the decay lemma (Section \ref{sec:decay}).} 

\noindent First, we recall  a regularity result from \cite{DSV}
for Sobolev minimizers of autonomous variational integrals (see Lemma~5.8 therein), which will be used in our proof. As noticed in \cite[Lemma 4.12]{HastoOk}, where a more general result is proved, the constant $C_0$ depends on $\varphi$ only through $p$ and $q$.   

\begin{proposition}\label{prop:nondip}
{Let $\varphi\in \Phic\cap C^1([0,+\infty))\cap C^2((0,+\infty))$ be with $\varphi'$ satisfying \inc{p-1} and \dec{q-1} for some $1<p\le q$.} Let $B_R$ be a ball in $\mathbb{R}^d$ and let $v\in W^{1,\varphi}(B_R)$ be a local minimizer of the functional $w\mapsto\int_{B_R}\varphi(|\nabla w|)\,\mathrm{d}x$. Then there exists a constant $C_0=C_0(p,q,d)$ such that
\begin{equation}\label{Linfty}
\sup_{B_{R/2}}\varphi(|\nabla v|)\le C_0\-int_{B_R}\varphi(|\nabla v|)\mathrm{d}x\,.
\end{equation}
\end{proposition}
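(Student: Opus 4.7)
My plan is to establish \eqref{Linfty} by a Moser-type iteration applied to powers of the scalar quantity $\varphi(|\nabla v|)$. The key preliminary observation is that, under \inc{p-1} and \dec{q-1} on $\varphi'$, together with $\varphi\in C^2((0,+\infty))$, one gets the two-sided comparisons $t\varphi'(t)\sim\varphi(t)$ and $t^2\varphi''(t)\sim\varphi(t)$ with constants depending only on $p$ and $q$. These structural equivalences are what eventually force $C_0$ to depend only on $p,q,d$, irrespective of any finer information on $\varphi$.

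First, I would argue that $v\in W^{2,2}_{\rm loc}(B_R)$ and that the Euler-Lagrange equation
\[
\int_{B_R} \frac{\varphi'(|\nabla v|)}{|\nabla v|}\, \nabla v\cdot \nabla \psi \,\mathrm{d}x = 0
\]
can be differentiated componentwise; this is standard via Nirenberg difference-quotients, if needed after a non-degenerate regularization (e.g.\ replacing $\varphi(t)$ by $\varphi(\sqrt{t^2+\eps^2})-\varphi(\eps)$, deriving uniform estimates, and sending $\eps\to 0$). Testing the differentiated equation with $\eta^2\, \partial_k v\, \bigl(\varphi(|\nabla v|)\bigr)^{\beta}$ for $\beta\geq 0$ and exploiting the ellipticity provided by $\varphi''(|\nabla v|)$ jointly with $t^2\varphi''(t)\sim \varphi(t)$, one obtains the Caccioppoli-type inequality
\[
\int_{B_R}\eta^2\,\Bigl|\nabla\bigl[\bigl(\varphi(|\nabla v|)\bigr)^{(\beta+1)/2}\bigr]\Bigr|^2\,\mathrm{d}x \;\leq\; C(\beta+1)^2 \int_{B_R} |\nabla\eta|^2\, \bigl(\varphi(|\nabla v|)\bigr)^{\beta+1}\,\mathrm{d}x,
\]
where $C=C(p,q,d)$. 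Combining this with the Sobolev embedding $W^{1,2}\hookrightarrow L^{2d/(d-2)}$ (or any $L^{r}$, $r<\infty$, when $d\leq 2$) on concentric subballs $B_r\subset B_R$ yields a reverse-type inequality
\[
\Bigl(\dashint_{B_r}\bigl(\varphi(|\nabla v|)\bigr)^{\chi(\beta+1)}\,\mathrm{d}x\Bigr)^{1/\chi} \;\leq\; \frac{C(\beta+1)^2}{(R-r)^2}\,\dashint_{B_R}\bigl(\varphi(|\nabla v|)\bigr)^{\beta+1}\,\mathrm{d}x,
\]
with $\chi = d/(d-2)>1$.

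A standard Moser iteration along a geometric sequence of radii $R_k = R/2+R/2^{k+1}$ and exponents $\beta_k+1=\chi^k$ then upgrades the initial $L^1$-information on $\varphi(|\nabla v|)$ to the $L^\infty$-estimate \eqref{Linfty}, provided the resulting product of constants converges, which is guaranteed because the bound for $C$ depends only on $p,q,d$. The main obstacle I foresee is precisely keeping this uniformity: the Caccioppoli step must invoke only the comparisons $t\varphi'(t)\sim\varphi(t)$ and $t^2\varphi''(t)\sim\varphi(t)$, never any other feature of $\varphi$, and the geometric series arising from the iteration must be controlled solely by $p,q,d$. A secondary technical issue is the initial $W^{2,2}_{\rm loc}$ regularity in the degenerate regime $p<2$, which is handled via the regularization above and uniform estimates that pass to the limit.
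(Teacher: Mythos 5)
The paper does not prove Proposition \ref{prop:nondip}: it is recorded as a citation to \cite[Lemma 5.8]{DSV}, with the clean dependence $C_0=C_0(p,q,d)$ traced to \cite[Lemma 4.12]{HastoOk}. Your sketch therefore cannot be compared to a proof in this paper; rather, it is a reconstruction of the argument underlying the cited references. As such it is a reasonable outline and goes in the right direction. The structural equivalences you single out are indeed the point: under \inc{p-1} and \dec{q-1} for $\varphi'$ together with $C^2$ regularity, one has $p\,\varphi(t)\le t\varphi'(t)\le q\,\varphi(t)$ and $(p-1)\varphi'(t)\le t\varphi''(t)\le(q-1)\varphi'(t)$, hence $t^2\varphi''(t)\sim\varphi(t)$ with constants only in $p,q$. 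These give the uniform (weighted) ellipticity $\min\{1,p-1\}\frac{\varphi'(|\xi|)}{|\xi|}|\zeta|^2\le D^2\big(\varphi(|\,\cdot\,|)\big)(\xi)[\zeta,\zeta]\le\max\{1,q-1\}\frac{\varphi'(|\xi|)}{|\xi|}|\zeta|^2$ which is exactly what makes the iteration constants $p,q,d$-only.

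That said, the proposal is a sketch and glosses over precisely the steps that make \cite{DSV} nontrivial. The Caccioppoli inequality for $\big(\varphi(|\nabla v|)\big)^{(\beta+1)/2}$ with constant $(\beta+1)^2$ and no other $\varphi$-dependence is not immediate: after differentiating the Euler--Lagrange system and testing with $\eta^2\partial_k v\,\big(\varphi(|\nabla v|)\big)^{\beta}$ one must absorb the extra second-order term coming from differentiating the weight $\big(\varphi(|\nabla v|)\big)^{\beta}$, and one must convert the resulting quantity $\frac{\varphi'(|\nabla v|)}{|\nabla v|}|\nabla^2 v|^2\big(\varphi(|\nabla v|)\big)^{\beta}$ into $\big|\nabla\big[\big(\varphi(|\nabla v|)\big)^{(\beta+1)/2}\big]\big|^2$ using $\varphi\sim t\varphi'$; both steps invoke only the equivalences, but they are where the work sits. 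The justification of $W^{2,2}_{\mathrm{loc}}$ regularity and the passage to the limit in the $\varepsilon$-regularization also deserve more than a sentence in the degenerate regime $p<2$ (and is in fact done in the paper at the level of Remark \ref{senzac2}, which is the only part the authors actually spell out). In short: your route is the one the cited sources take, it is not in conflict with the paper, but as written it is a program rather than a proof, and the hard estimates are asserted rather than derived.
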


\begin{remark}\label{senzac2}
Actually, the $C^2$ regularity assumption on $\varphi$ can be removed working as in  \cite{HastoOk}, finding for every $\varepsilon>0$ a suitably regular (twice differentiable) auxiliary function $\varphi^\varepsilon$ and approximating the minimizer with the solution to the related 
minimization problem, identified by $\varphi^\varepsilon$. \\
{Let $\varphi\in C^1([0,+\infty))$. Let $\eta\in C^\infty_c(\R)$ be such that $\eta\geq0$, ${\rm supp}\,\eta\subset (0,1)$ and $\|\eta\|_1=1$, and consider for $\varepsilon>0$ the scaled kernel $\eta_\varepsilon(t):=\frac{1}{\varepsilon}\eta(\frac{t}{\varepsilon})$. According to \cite{HastoOk} we define
\begin{equation*}
\varphi^\varepsilon(t):= \int_0^{+\infty} \varphi(t\sigma) \eta_\varepsilon(\sigma-1)\,\mathrm{d}\sigma \,.
\end{equation*}
Then it easy to check that $\varphi^\varepsilon\in C^\infty((0,+\infty))\cap C^1([0,+\infty))$, 
\begin{equation}
\varphi(t) \leq \varphi^\varepsilon(t) \leq (1+\varepsilon)^q \varphi(t)
\label{eq:epsestim}
\end{equation}
and $(\varphi^\varepsilon)'$ satisfies {\rm (inc)}$_{p-1}$ and {\rm (dec)}$_{q-1}$.}

{Now, let $u\in W^{1,\varphi}(B_R)$ be a local minimizer of the functional $w\mapsto\int_{B_R}\varphi(|\nabla w|)\,\mathrm{d}x$ and $B_r \subset\subset B_R$. Note that, by \eqref{eq:epsestim}, it holds that $u\in W^{1,\varphi^\varepsilon}(B_r)$. Let $u_\varepsilon\in W^{1,\varphi^\varepsilon}(B_r)$ be the minimizer of
\begin{equation} \int_{B_r}\varphi^\varepsilon(|\nabla v|)\,\mathrm{d}x \,\,\,\, \mbox{ such that $u_\varepsilon=u$ on $\partial B_r$,}
\label{eq:regularizedprob}
\end{equation}
whose existence is ensured by the {direct methods in the calculus of variations.} Then, from the minimality of $u_\varepsilon$ 
and taking into account \eqref{eq:epsestim} we get
\begin{equation}
\int_{B_r}{\varphi}(|\nabla u_\varepsilon|)\,\mathrm{d}x \leq \int_{B_r}\varphi^\varepsilon(|\nabla u_\varepsilon|)\,\mathrm{d}x \leq \int_{B_r}\varphi^\varepsilon(|\nabla u|)\,\mathrm{d}x \leq (1+\varepsilon)^q \int_{B_r}{\varphi}(|\nabla u|)\,\mathrm{d}x \,,
\label{eq:epsestim2}
\end{equation}
whence we deduce that the sequence $u_\varepsilon$ is equibounded in $W^{1,\varphi}(B_r)$. Then, since by virtue of Proposition~\ref{prop:properties}(i) $\varphi$ satisfies \inc{p} and \dec{q}, the space $W^{1,\varphi}$ is reflexive \cite[Theorem 6.1.4(d)]{HH}. Therefore, $u_\varepsilon$ weakly converges (up to a subsequence not relabeled) in $W^{1,\varphi}(B_r)$ to a function $\bar{u}$ such that $\bar{u}=u$ on $\partial B_r$.  Now, by lower semicontinuity we get
\begin{equation}
\int_{B_r}{\varphi}(|\nabla \bar{u}|)\,\mathrm{d}x \leq \mathop{\lim\inf}_{\varepsilon\to0}\int_{B_r}{\varphi}(|\nabla u_\varepsilon|)\,\mathrm{d}x \leq \int_{B_r}{\varphi}(|\nabla v|)\,\mathrm{d}x
\label{eq:epsestim3}
\end{equation}
for every function $v$ coinciding with $u$ on $\partial B_r$. Thus, $\bar{u}$ is a local minimizer of $w\mapsto\int_{B_r}\varphi(|\nabla w|)\,\mathrm{d}x$, and, by uniqueness of solutions to the boundary value problem, it holds that $\bar{u}=u$ on $B_r$. Combining \eqref{eq:epsestim2} and \eqref{eq:epsestim3} we also get
\begin{equation*}
\lim_{\varepsilon\to0} \int_{B_r}{\varphi}(|\nabla u_\varepsilon|)\,\mathrm{d}x = \int_{B_r}{\varphi}(|\nabla u|)\,\mathrm{d}x \,.
\end{equation*} 
Now, by \cite[Lemma 4.12 (4.13)]{HastoOk}, for any $B_\rho(x_0)\subset B_r$ we have
\begin{equation*}
\sup_{B_{\frac{\rho}{2}}(x_0)} |\nabla u_\varepsilon| \leq C \-int_{B_\rho(x_0)}|\nabla u_\varepsilon|\mathrm{d}x \leq C \varphi^{-1} \left(\-int_{B_\rho(x_0)} \varphi(|\nabla u_\varepsilon|)\mathrm{d}x\right) \,,
\end{equation*}
where the constant $C$ depends only on $p,q$ and $d$. Passing to the limit as $\varepsilon\to0$ we finally get 
\begin{equation*}
\sup_{B_{\frac{\rho}{2}}(x_0)} |\nabla u|\leq C \varphi^{-1} \left(\-int_{B_\rho(x_0)} \varphi(|\nabla u|)\mathrm{d}x\right) \,.
\end{equation*}}
\end{remark}

Finally, we consider the $\varphi$-recession function associated to a sequence of convex functions $\varphi_j$, capturing the behaviour at infinity of $\varphi_j$ (see also  \cite[Lemma 3.2]{FMT} and \cite[Lemma 4.3]{LSSV}). \begin{lemma}\label{lem:lemma3.2fmt}
Let $(\varphi_j)_{j\in\N}$, $\varphi_j: [0,+\infty)\to[0,+\infty)$, be a sequence of $C^1$ convex functions satisfying $\varphi_j(0)=0$ and assume that $\varphi'_j$ satisfies \inc{p-1} and \dec{q-1}, where $1<p\le q<+\infty$.
Let $(\beta_j)\subset(0,\infty)$ be a sequence such that $\lim_j \beta_j=+\infty$. Then, setting
\begin{equation*}
\widetilde{\varphi}_j(t):=\frac{\varphi_{j}(t\beta_{j})}{\varphi_j(\beta_j)}\,,\quad t\in[0,+\infty)\,,\,\, j\in\N\,,
\end{equation*}
there exists a subsequence $(\beta_{j_k})$ such that $\widetilde{\varphi}_{j_k}$ converge to a  $C^1$ convex function $\varphi_\infty$ uniformly on compact subsets of $[0,+\infty)$. Moreover, $\varphi'_\infty$ satisfies \inc{p-1} and \dec{q-1}.
\end{lemma}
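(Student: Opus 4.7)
\medskip
\noindent\emph{Proof plan.} I would begin with the elementary consequences of the rescaling: $\widetilde{\varphi}_j(0)=0$, $\widetilde{\varphi}_j(1)=1$, and $\widetilde{\varphi}_j'(t)=\beta_j\varphi_j'(\beta_jt)/\varphi_j(\beta_j)$. Since Proposition \ref{prop:properties} ensures that $\varphi_j$ satisfies \inc{p} and \dec{q}, the scaling inequality \eqref{cons2} applied at $s=\beta_j$ yields the uniform two-sided bound $\min\{t^p,t^q\}\le\widetilde{\varphi}_j(t)\le\max\{t^p,t^q\}$. A change of variables shows that the auxiliary functions
\[
\psi_j(t):=\frac{\widetilde{\varphi}_j'(t)}{t^{p-1}},\qquad \eta_j(t):=\frac{\widetilde{\varphi}_j'(t)}{t^{q-1}}
\]
inherit, respectively, the non-decreasing and non-increasing character from \inc{p-1} and \dec{q-1} of $\varphi_j'$. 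From \eqref{cons1} one gets $\widetilde{\varphi}_j'(t)\le q\,\widetilde{\varphi}_j(t)/t\le q\max\{t^{p-1},t^{q-1}\}$, so $\psi_j$ and $\eta_j$ are pointwise equibounded on each compact subset of $(0,+\infty)$.

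The plan is then to apply Helly's selection theorem, combined with a diagonal extraction, so as to produce a (common) subsequence, not relabeled, along which $\psi_{j_k}$ and $\eta_{j_k}$ converge pointwise on $(0,+\infty)$ to monotone limits $\psi_\infty$ and $\eta_\infty$. Passing the identity $\eta_{j_k}(t)=t^{p-q}\psi_{j_k}(t)$ to the limit, one obtains $\eta_\infty(t)=t^{p-q}\psi_\infty(t)$ for all $t>0$. The key observation will be that a jump of $\psi_\infty$ at some $t_0>0$ would force, after multiplication by the continuous positive factor $t_0^{p-q}$, a jump of the same orientation at $t_0$ for $\eta_\infty$; this is incompatible with $\eta_\infty$ being non-increasing. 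Hence both $\psi_\infty$ and $\eta_\infty$ are continuous on $(0,+\infty)$.

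By the Dini--Polya theorem, pointwise convergence of monotone functions to a continuous monotone limit is uniform on compact subsets; applied to $\psi_{j_k}$, this gives uniform convergence $\widetilde{\varphi}_{j_k}'\to G$ on compact subsets of $(0,+\infty)$, where $G(t):=t^{p-1}\psi_\infty(t)=t^{q-1}\eta_\infty(t)$ is continuous and non-decreasing. I would then define
\[
\varphi_\infty(t):=\int_0^t G(s)\,\mathrm{d}s,
\]
finite for every $t\ge 0$ as $G(s)\le q\max\{s^{p-1},s^{q-1}\}$ is integrable at the origin. By construction $\varphi_\infty\in C^1([0,+\infty))$ is convex, and $\varphi_\infty'=G$ inherits \inc{p-1} and \dec{q-1} from $\psi_\infty$ and $\eta_\infty$. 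Uniform convergence of $\widetilde{\varphi}_{j_k}$ to $\varphi_\infty$ on any $[0,M]$ finally follows by integrating the uniform convergence of derivatives on $[\delta,M]$ and by the trivial bound $\widetilde{\varphi}_{j_k}(t),\,\varphi_\infty(t)\le C\max\{\delta^p,\delta^q\}$ on $[0,\delta]$, letting $\delta\downarrow 0$.

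The hard point will be producing a genuine $C^1$ limit rather than a mere convex, a.e.-differentiable one: compactness of convex functions alone cannot provide this, and it is precisely the joint exploitation of \inc{p-1} and \dec{q-1}, through the mutual constraint $\eta_j=t^{p-q}\psi_j$, that rules out jumps in the limit derivative and secures the full $C^1$ regularity of $\varphi_\infty$.
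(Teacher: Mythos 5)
Your proposal is correct, and it follows a genuinely different route from the paper's own argument. The paper runs Ascoli--Arzel\`a twice: first on $\widetilde{\varphi}_j$ itself (equiboundedness and equicontinuity on compacts from \eqref{cons2} and \eqref{cons1}), and then again on $\widetilde{\varphi}'_j$, where the essential step is the \emph{explicit} modulus-of-continuity bound
\[
0\le \widetilde{\varphi}'_j(t)-\widetilde{\varphi}'_j(t_0)\le \widetilde{\varphi}'_j(t_0)\Bigl[\bigl(\tfrac{t}{t_0}\bigr)^{q-1}-1\Bigr]
\le q\max\{t_0^{p-1},t_0^{q-1}\}\,\frac{t^{q-1}-t_0^{q-1}}{t_0^{q-1}},\qquad t>t_0>0,
\]
coming from \dec{q-1} and the monotonicity of $\widetilde{\varphi}'_j$. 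You replace this quantitative estimate by a soft compactness argument: Helly's selection theorem on the two monotone normalizations $\psi_j=\widetilde{\varphi}'_j/t^{p-1}$ and $\eta_j=\widetilde{\varphi}'_j/t^{q-1}$, followed by the jump-cancellation observation that a positive jump of the non-decreasing $\psi_\infty$ would transfer (via the continuous positive factor $t_0^{p-q}$) to a positive jump of the non-increasing $\eta_\infty$, and then Dini--P\'olya to upgrade pointwise to locally uniform convergence. Both proofs extract exactly the same structural input --- the conjunction of \inc{p-1} and \dec{q-1} --- but yours trades an explicit estimate for a more conceptual rigidity argument. The paper's route is self-contained with elementary inequalities; yours is shorter in spirit and makes the mechanism that excludes derivative jumps more transparent, at the price of invoking two further classical theorems. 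Both correctly obtain the $C^1$ limit, its convexity, the two-sided bound $\min\{t^p,t^q\}\le\varphi_\infty(t)\le\max\{t^p,t^q\}$, and the inheritance of \inc{p-1}, \dec{q-1} by $\varphi'_\infty$.
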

\begin{proof}
The assertion follows in a standard way noticing that each $\widetilde{\varphi}_j$  is uniformly bounded and equicontinuous in any compact set of $[0,+\infty)$.  Indeed, for every $t\ge 0$, thanks to \eqref{cons2}, 
\[
{\widetilde \varphi}_j(t)\le \max\{t^{p},t^{q}\},
\]
which entails the equiboundedness (with respect to $j$) on compact sets of $[0,+\infty)$.
Moreover, given $t>t_0\ge 0$, from \eqref{cons1} 
and \eqref{cons2}, we have
\[
{\widetilde \varphi}_j(t)-{\widetilde \varphi}_j(t_0)=\int_{t_0}^t\frac{{\mathrm d}}{{\mathrm d}\tau}{\widetilde \varphi}_j(\tau){\mathrm d}\tau=\int_{t_0}^t\frac{\varphi_j'(\tau\beta_j)\beta_j}{\varphi_j(\beta_j)}{\mathrm d}\tau\le \int_{t_0}^t \frac{q\varphi_j(\tau\beta_j)}{\varphi_j(\beta_j)\tau}{\mathrm d}\tau\le q\int_{t_0}^t\max\{\tau^{p-1},\tau^{q-1}\}{\mathrm d}\tau,
\]
from which we deduce the equicontinuity of $\tilde{\varphi}_j$ on compact sets of $[0,+\infty)$, since the function $t \to \max\{t^{p-1},t^{q-1}\}\in L^1_{\rm loc}([0,+\infty))$.
Then, {by Ascoli-Arzelà Theorem,} up to a subsequence (not relabeled), ${\widetilde\varphi}_j$ converges to a function $\varphi_\infty$, uniformly on compact sets of $[0,+\infty)$. Obviously, the continuous function $\varphi_\infty$ inherits the convexity of $\varphi_j$.
Moreover, we can prove that $\varphi_\infty$ is a $C^1$ function. Actually, the regularity properties of $\varphi'_j$ allow us to improve the convergence of $\widetilde{\varphi}_j$, entailing the local uniform convergence of ${\widetilde\varphi}'_j$. From now on let us consider the convergent subsequence of $\widetilde{\varphi}_j$, not relabeled.

\noindent Since, for $t>0$, { again by \eqref{cons1} and \eqref{cons2} we have}
\begin{equation}\label{boundphi'j}
{\widetilde\varphi}'_j(t)=\frac{\beta_j\varphi'_j(t\beta_j)}{\varphi_j(\beta_j)}=\frac{t\beta_j\varphi'_j(t\beta_j)}{\varphi_j(t\beta_j)}\,\frac{\varphi_j(t\beta_j)}{t\varphi_j(\beta_j)}\le q\frac{\max\{t^p,t^q\}}{t}= q\max\{t^{p-1},t^{q-1}\},
\end{equation}
{and since ${\widetilde\varphi}'_j(0)=0$,} we get the uniform boundedness of ${\widetilde\varphi}'_j$ in any compact set of $[0,+\infty)$. 

\noindent Now, for $t>t_0>0$, we use the property \dec{q-1} of $\varphi'_j$ (and accordingly of $\widetilde{\varphi}'_j$) and \eqref{boundphi'j}, to obtain
\[
{\widetilde\varphi}'_j(t)-{\widetilde\varphi}'_j(t_0)\le {\widetilde\varphi}'_j(t_0)\left[\left(\frac{t}{t_0}\right)^{q-1}-1\right]\le q\max\{t_0^{p-1},t_0^{q-1}\}\left[\frac{t^{q-1}-t_0^{q-1}}{t_0^{q-1}}\right],
\]
which, together with the fact that 
\[
{\widetilde\varphi}'_j(t)-{\widetilde\varphi}'_j(0)={\widetilde\varphi}'_j(t) \le q\max\{t^{p-1},t^{q-1}\}, \ \ \ \ t>0,
\]
lead to the equicontinuity of ${\widetilde\varphi}'_j$ on compact sets of $[0,+\infty)$.
Then $\varphi_\infty$ is a $C^1$ function (obtained as the uniform limit on compact sets of a subsequence of ${\widetilde\varphi}'_j$). Finally, we observe that $\varphi'_\infty$ satisfies \inc{p-1} and \dec{q-1}.
\endproof

\section{Free-discontinuity functionals with non-standard growth}\label{sec:freepx}
In this paragraph we consider integral functionals of the form 
\begin{equation}\label{funct}
F(u,c,A) :=\int_A\varphi(x,|\nabla u|)\,\mathrm{d}x+c{\mathcal H}^{d-1}(S_u\cap A),
\end{equation}
defined on $SBV_{\rm loc}(\Omega)$, {where $c>0$ and $A\subset\Omega$ is an open set.} The function $\varphi:\Omega\times[0,+\infty)\to[0,+\infty)$  satisfies the following  assumptions:
{
\begin{enumerate}[font={\normalfont},label={(${\rm H}_\arabic*$)}]
\item $\varphi\in\Phic(\Omega)\cap C^1([0,+\infty))$; \label{hpuno}
\item  $\varphi_t$ satisfies \azero{}, \inc{p-1}, \dec{q-1}, where $1<p\le q$; \label{hpdue}
\item $\varphi$ satisfies \vauno{}.  \label{hptre}
\end{enumerate}}

{Since \ref{hpuno} and \ref{hpdue} are in force, by virtue of Proposition~ \ref{prop:properties} we conclude that $\varphi\in N(\Omega)$ and it satisfies \eqref{cons2}-\eqref{v0phi}.} Concerning the modulus of continuity of $\omega(r)$ in \vauno{} for our purposes it will be enough to assume that $\omega(0)=0$ without prescribing any decay rate. We remark that in the Sobolev case maximal regularity has been obtained in \cite{HastoOk} under the assumption $\omega(r)\lesssim r^\beta$, for some  $\beta\in(0,1]$. This additional assumption is however not needed for Ahlfors type regularity of the jump set, as in the blow-up procedure one only needs to be in a position for using \eqref{Linfty}. Clearly, it may come again into play for having regularity of $u$ in $\Omega\setminus K$, which then directly stems out of the result of \cite{HastoOk}.
%
%

%
%
%
%
%


{We recall that a function $u\in SBV_{\rm loc}(\Omega)$ satisfying $F(u,c,A)<+\infty$ for all open sets $A\ssubset\Omega$ is a \emph{local minimizer} of $F(\cdot,c,\Omega)$ in $\Omega$ if 
\[
F(u,c,A)\le F(v,c,A)
\]
for all $v\in SBV_{\rm loc}(\Omega)$ satisfying $\{v\neq u\} \ssubset A \ssubset \Omega$. Then, in order to get an estimate of how far a function $u$ is from being a minimizer of $F$ in $\Omega$, the classical definition of \emph{deviation from minimality} ${\rm Dev}(u,c,\Omega)$ has been introduced (see, e.g., \cite{Ambrosio-Fusco-Pallara:2000}): it is defined as the smallest $\lambda\in [0,+\infty]$ such that
\[
F(u,c,A)\le F(v,c,A) + \lambda
\]
for all $v\in SBV_{\rm loc}(\Omega)$ satisfying $\{v\neq u\} \ssubset A \ssubset \Omega$. 
Clearly, $u$ is a local minimizer of $F(\cdot,c,\Omega)$ in $\Omega$ if ${\rm Dev}(u,c,\Omega)=0$. }

\end{proof}
\subsection{A decay lemma}  \label{sec:decay}

In this Section we prove  a crucial decay property of the energy $F$ {(see \eqref{funct})} in small balls. The proof is  the non-standard growth counterpart of the well-known argument, based on a blow-up procedure, devised for energies with $p$-growth (see, e.g., \cite[Lemma~7.14]{Ambrosio-Fusco-Pallara:2000}).




Recall that the shorthand $F(u,A)$ below stands for $F(u,1,A)$, as defined in \eqref{funct}.

\begin{lemma}[Decay estimate]\label{lem:decay}
Let $\varphi$ be a function satisfying {\rm \ref{hpuno}}, {\rm\ref{hpdue}}, {\rm\ref{hptre}}. There is a constant $C_1=C_1(d,p,q)$ with the property that, for every $\Omega_0\ssubset\Omega$ and for every $\tau\in (0,1)$ there exist $\varepsilon=\varepsilon(\tau,\Omega_0)$, $\theta=\theta(\tau,\Omega_0)$ in $(0,1)$ such that if $u\in SBV(\Omega)$ satisfies, for $x\in\Omega_0$ and $B_\rho(x)\ssubset\Omega$, $\rho<\varepsilon^2$,
\[
F(u,B_\rho(x))\le\varepsilon\rho^{d-1}, \ \ \ \ \ {\rm Dev}(u,B_\rho(x))\le\theta F(u,B_\rho(x)),
\]
then 
\begin{equation}\label{decade}
F(u,B_{\tau\rho}(x))\le C_1\tau^d F(u, B_\rho(x)).
\end{equation}
\end{lemma}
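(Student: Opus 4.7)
The plan is to argue by contradiction, adapting the blow-up scheme of \cite[Lemma~7.14]{Ambrosio-Fusco-Pallara:2000} to the generalized Orlicz setting. If the conclusion fails, one finds $\tau\in(0,1)$, sequences $\varepsilon_j,\theta_j\to 0$, points $x_j\in\Omega_0$, radii $\rho_j<\varepsilon_j^2$ and $u_j\in SBV(\Omega)$ violating \eqref{decade}. I would introduce the averaged energy $\lambda_j:=F(u_j,B_{\rho_j}(x_j))/\rho_j^{d-1}\le \varepsilon_j$, the frozen integrand $\varphi_j(t):=\varphi^-_{\rho_j,x_j}(t)$, and the typical scale $\beta_j>0$ chosen so that $\varphi_j(\beta_j)=\lambda_j/\rho_j$. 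The smallness $\rho_j<\varepsilon_j^2$ together with \ref{hpdue} ensures that $\beta_j\to+\infty$ and that $\varphi_j(\beta_j)$ sits in the admissible interval $[\omega(\rho_j),\mathcal{L}^d(B_{\rho_j})^{-1}]$ of \ref{hptre} for large $j$. The rescaled functions
\[
w_j(y):=\frac{u_j(x_j+\rho_j y)-c_j}{\rho_j\beta_j},\qquad y\in B_1,
\]
with $c_j:=\mathrm{med}(u_j(x_j+\rho_j\cdot);B_1)$, then satisfy, after a change of variables and using \ref{hptre} to replace $\varphi(x,\cdot)$ by $\varphi_j$ up to a $(1+\omega(\rho_j))$ factor, the normalization $\int_{B_1}\widetilde{\varphi}_j(|\nabla w_j|)\,\mathrm{d}y\le C$ (with $\widetilde{\varphi}_j$ as in Lemma~\ref{lem:lemma3.2fmt}) and $\mathcal{H}^{d-1}(S_{w_j}\cap B_1)\le \varepsilon_j\to 0$.

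Next I would apply Theorem~\ref{thm:3.5} to $\bar w_j:=T_{B_1}w_j-\mathrm{med}(w_j;B_1)$, obtaining a.e.\ and weak $W^{1,1}$ convergence to some $w_\infty\in W^{1,1}(B_1)$, while by Lemma~\ref{lem:lemma3.2fmt} we may extract $\widetilde{\varphi}_j\to\varphi_\infty$ locally uniformly in $[0,+\infty)$, with $\varphi_\infty\in\Phic\cap C^1$ inheriting \inc{p} and \dec{q}. The core of the argument is to prove that $w_\infty$ is a local minimizer of $v\mapsto\int_{B_1}\varphi_\infty(|\nabla v|)\,\mathrm{d}x$. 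Given a competitor $v$ with $v=w_\infty$ near $\partial B_s$ for some $s\in(\tau,1)$, one constructs a Sobolev quasi-competitor $\tilde u_j$ on $B_{\rho_j}(x_j)$ by interpolating on a thin annulus between (a rescaling of) $v$ and $u_j$: here Theorem~\ref{Lusin} is used to replace $w_j$ by a Lipschitz function outside a set of small measure, the Poincaré-type inequality Theorem~\ref{thm:poincsbvphi} (in its $r>1$ form, which grants the needed equi-integrability of $\varphi_j(|\bar w_j|)$) controls the bulk energy generated by the interpolation, and the $x$-dependence is reabsorbed via \ref{hptre}. Passing to the limit in $\mathrm{Dev}(u_j,B_{\rho_j}(x_j))\le\theta_j F(u_j,B_{\rho_j}(x_j))$ with $\theta_j\to 0$, minimality of $w_\infty$ is achieved.

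Once the limit problem has been identified, Proposition~\ref{prop:nondip} (together with Remark~\ref{senzac2} to dispense with the $C^2$ regularity of $\varphi_\infty$) yields
\[
\sup_{B_{1/2}}\varphi_\infty(|\nabla w_\infty|)\le C_0\-int_{B_1}\varphi_\infty(|\nabla w_\infty|)\,\mathrm{d}x\le C_0,
\]
with $C_0=C_0(d,p,q)$, whence $\int_{B_\tau}\varphi_\infty(|\nabla w_\infty|)\,\mathrm{d}x\le \kappa_d C_0\tau^d$ for every $\tau\le 1/2$. Unrolling the scaling, the lower semicontinuity of the bulk part combined with the fact that $\mathcal{H}^{d-1}(S_{w_j}\cap B_\tau)/\varphi_j(\beta_j)\to 0$ (which is precisely where $\beta_j\to+\infty$ is needed) gives
\[
\limsup_{j\to\infty}\frac{F(u_j,B_{\tau\rho_j}(x_j))}{F(u_j,B_{\rho_j}(x_j))}\le \kappa_d C_0\tau^d,
\]
contradicting the failure of \eqref{decade} provided $C_1\ge 2\kappa_dC_0$. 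A separate treatment of the range $\tau\in[1/2,1)$ is trivial by taking $C_1$ large enough.

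The principal obstacle is the minimality step. Unlike in the classical $p$-growth case, the bulk integrand is simultaneously non-autonomous and of non-standard growth, so one must control in a single stroke (i) the comparison between $\varphi(x,\cdot)$ and its frozen rescaling $\widetilde\varphi_j$, and (ii) the Orlicz energy generated by interpolating a Sobolev competitor through the (small but nontrivial) jump set of $w_j$. Task (i) depends on the fact that \ref{hptre} is applicable at the chosen gradient scale $\beta_j$, a point which motivates the assumption $\rho_j<\varepsilon_j^2$, while task (ii) rests on Theorem~\ref{thm:poincsbvphi} together with the Lusin approximation of Theorem~\ref{Lusin} and the maximal function bound of Corollary~\ref{C:MaxOpBnd}, supplying the equi-integrability that kills the interpolation error as $j\to\infty$.
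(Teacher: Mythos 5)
Your overall strategy --- argument by contradiction, blow-up, compactness via Theorem~\ref{thm:3.5}, identification of the limit $\varphi_\infty$ via Lemma~\ref{lem:lemma3.2fmt}, minimality of the limit via Lusin--Poincar\'e--maximal function estimates, and finally Proposition~\ref{prop:nondip} to produce the contradiction --- coincides with the paper's architecture. The genuine gap is in your choice of the gradient scale $\beta_j$. You set $\varphi_j(\beta_j)=\lambda_j/\rho_j$ with $\lambda_j := F(u_j,B_{\rho_j}(x_j))/\rho_j^{d-1}$, and claim that $\rho_j<\varepsilon_j^2$ alone ensures $\beta_j\to+\infty$ and that $\varphi_j(\beta_j)$ lies in $[\omega(\rho_j),\mathcal{L}^d(B_{\rho_j})^{-1}]$. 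Neither is true in general: the contradiction hypothesis gives only $\lambda_j\le\varepsilon_j$ with no lower bound, and if $\lambda_j$ decays on the order of $\rho_j\omega(\rho_j)$ then $\lambda_j/\rho_j$ stays bounded and falls below $\omega(\rho_j)$, so \vauno{} is inapplicable precisely where you invoke it to replace $\varphi(x,\cdot)$ by the frozen integrand. The paper avoids this by setting $\sigma_j:=\psi_j^{-1}(\varepsilon_j/\rho_j)$ with $\psi_j:=\varphi(x_j,\cdot)$, so that $\psi_j(\sigma_j)=\varepsilon_j/\rho_j>1/\varepsilon_j\to+\infty$ unconditionally, the needed lower bound for the \vauno{} window following from $\rho_j<\varepsilon_j^2$ rather than from a lower bound on the energy. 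To salvage your normalisation you would need a separate argument in the regime $\lambda_j\ll\varepsilon_j$.

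There are two further issues. First, you freeze the integrand as $\varphi_j:=\varphi^-_{\rho_j,x_j}$, but Lemma~\ref{lem:lemma3.2fmt} requires a $C^1$ convex function, and a pointwise infimum of $C^1$ convex functions is in general neither convex nor $C^1$; the paper instead freezes at the centre point, $\psi_j=\varphi(x_j,\cdot)$, which inherits $C^1$ convexity from \ref{hpuno}, and then transfers the locally uniform convergence to the $y$-dependent rescaled integrand via \vauno{}. Second, Theorem~\ref{thm:poincsbvphi} and Theorem~\ref{thm:3.5} give $\int_{B_1}\varphi_j^-(|\bar w_j|)^r\,\mathrm{d}y\le C$ only for the frozen $\varphi_j^-$, while the lower-semicontinuity and interpolation steps later require the analogous bound with $\varphi_j^+$. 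The paper introduces a second, $j$-dependent truncation at level $T_j:=(\varphi^-_{\rho_j})^{-1}(\rho_j^{-\mu})/\sigma_j$ with $\mu\in(1,d)$ (Step~2 of the proof) precisely to bring the truncated sequence into the \vauno{} range where $\varphi_j^+$ and $\varphi_j^-$ are comparable; your proposal skips this step, so the equi-integrability of the interpolation error in the annulus that you need to conclude minimality of $w_\infty$ is not actually established.
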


\begin{proof}
It is enough to assume $\tau\in (0,1/2)$ (otherwise just take $C_1=2^d$). We argue by contradiction and assume that \eqref{decade} does not hold. 
In this case, there exist a sequence $u_j\in SBV(\Omega)$, sequences of nonnegative numbers $\varepsilon_j$, $\theta_j$, $\rho_j$, with $\lim_j \varepsilon_j=\lim_j\theta_j=0$, $\rho_j\le \varepsilon_j^2$, and $x_j\in\Omega_0$, with $B_{\rho_j}(x_j)\ssubset\Omega$, such that
\begin{equation}\label{assu1}
F(u_j,B_{\rho_j}(x_j))\le\varepsilon_j\rho_j^{d-1}\,, \ \ \ \ \ {\rm Dev}(u_j,B_{\rho_j}(x_j))\le\theta_j F(u_j,B_{\rho_j}(x_j))\,,
\end{equation}
and 
\begin{equation}\label{assu2}
F(u_j,B_{\tau\rho_j}(x_j))> C_0\tau^d F(u_j, B_{\rho_j}(x_j))\,,
\end{equation}
where $C_0$ comes from  \eqref{Linfty} in Proposition \ref{prop:nondip} {(see Remark~\ref{senzac2})}.  \\
\noindent
{{\it Step 1: Blow-up.}} For every $j$, we consider the $N$-function $\psi_j:[0,+\infty)\to[0,+\infty)$ and the function $w_j:B_1\to\mathbb{R}$ defined as
\begin{equation*}
\psi_j(t):=\varphi(x_j,t) \quad \mbox{ and } \quad w_j(y):=\frac{u_j(x_j+\rho_jy)}{{\sigma_j}\rho_j}\,,
\end{equation*}
respectively, where
\begin{equation}
{\sigma_j:=\psi_j^{-1}\left(\frac{1}{\gamma_j\rho_j}\right)\,, }
\label{eq:sigmaj}
\end{equation}
and $\gamma_j:=\frac{1}{\varepsilon_j}$. We denote by $m_j=$ med$(w_j, B_1)$ and define $v_j =w_j -m_j$;  if we set 
\[
F_j(v,\gamma_j,B_\rho):=\int_{B_\rho}\varphi_j(y,|\nabla v|)\,\mathrm{d}y+\gamma_j\mathcal{H}^{d-1}(S_v,B_\rho),
\]
with ${\varphi_j(y,t):=\gamma_j\rho_j\varphi(x_j+\rho_jy,t\sigma_j)}$, \eqref{assu1} and \eqref{assu2} can be rewritten respectively as
\begin{equation}\label{assv1}
F_j(v_j,\gamma_j,B_1)\le 1\,, \ \ \ \ \ {\rm Dev}_j(v_j,\gamma_j,B_1)\le\theta_j\,,
\end{equation}
and 
\begin{equation}\label{assv2}
F_j(v_j,\gamma_j,B_\tau)> C_0\tau^d F_j(v_j,\gamma_j, B_1)\,.
\end{equation}
The first bound in \eqref{assv1} in turn implies 
\begin{equation}\label{assv3}
{\int_{B_1}\varphi_j^-(|\nabla v_j|)\,\mathrm{d}y\le 1, \quad \mbox{ and }\quad \mathcal{H}^{d-1}(S_{v_j}\cap B_1)\le\frac{1}{\gamma_j}=\varepsilon_j\,, }
\end{equation}
where $\displaystyle{\varphi_j^-(t):=\inf_{B_{1}}\varphi_{j}(y,t)}$. The function $\varphi_j^-$ is a weak $\Phi$-function satisfying \inc{p}, \dec{q}, and, as a consequence, \eqref{cons2}; that is,
\[
\varphi_j^-(s)\min\{t^p,t^q\}\le \varphi_j^-(st)\le\max\{t^p,t^q\}\varphi_j^-(s)\,,\quad {\forall s,t.}
\]
In particular, for $j$ sufficiently large, 
\begin{equation}
\min\{t^p,t^q\}\lesssim \varphi_j^-(t)\le \max\{t^p,t^q\},
\label{eq:stimevarphij}
\end{equation}
the constant hidden in $\lesssim$ being independent of $j$.
In fact $\varphi_j^-(1)\le 1$, and, defining $\displaystyle\varphi_j^+(t) :=\sup_{B_1}\varphi_j(y,t)$, { since from the definition
\begin{equation*}
\varphi^\pm_j(t) = \gamma_j\rho_j \varphi^\pm_{\rho_j}(t\sigma_j)\,, 
\end{equation*}}
by assumption \vauno{},
\begin{equation}\label{a01}
\varphi_{j}^-(1)\ge\frac{1}{2} \varphi_{j}^+(1)\ge \frac{1}{2}
\end{equation}
if $\varphi_{\rho_j}^-(\sigma_j)\in[\omega(\rho_j), \frac{1}{\mathcal{L}^d(B_{\rho_j})}]$. For $j$ large enough,  $\varphi_{\rho_j}^-(\sigma_j)<\omega(\rho_j)\le 1$ does not occur since,  
by \eqref{cons2} and \eqref{v0phi}, this would entail $\frac{1}{\gamma_j\rho_j}$ equibounded. If in the end  $\varphi_{\rho_j}^-(\sigma_j)>\frac{1}{\mathcal{L}^d(B_{\rho_j})}$, then
\begin{equation}\label{a02}
\varphi_{j}^-(1) {>}\gamma_j\rho_j\frac{1}{\kappa_d\rho_j^d}=\frac{1}{\kappa_d\varepsilon_j\rho_j^{d-1}}\ge 1,
\end{equation}
for $j$ large enough. Analogously, $\varphi_j^+$ satisfies \inc{p}, \dec{q}, and, {for $j$ sufficiently large,}
\begin{equation*}
\min\{t^p,t^q\}\le \varphi_j^+(t)\lesssim \max\{t^p,t^q\},
\end{equation*}
the constant hidden in $\lesssim$ being independent of $j$.
Since $0$ is a median for all $v_j$, taking into account Theorem \ref{thm:3.5} (applied with $\varphi_j=\varphi_j^-$), and extracting eventually a subsequence (not relabeled for convenience),  we find a function $v_0\in W^{1,1}(B_1)$ such that the following scheme  holds
\begin{equation}\label{converge}
\begin{split}
 &{\rm (i)} \,\, \bar{v}_j:=T_{B_1}{v}_j \to v_0 \quad  \mathcal{L}^d-\hbox{a.e. in $B_1$}\,,\\
 &{\rm (ii)} \,\,{\exists\, r\in(1,{\textstyle\frac{d}{d-1}}) \,\, \mbox{ such that \,\,}\int_{B_1} \varphi_j^-(|\bar{v}_j|)^r\mathrm{d}y\le C,}\\
&{\rm (iii)}  \,\, \nabla\bar{v}_j\rightharpoonup \nabla v_0\quad \hbox{weakly in $L^1(B_1)$}\,,\\
&{\rm (iv)} \,\, {\mathcal L}^d(\{{\bar  v}_j\neq v_j\}\cap B_1)\le 2 \left(2\gamma_{\rm iso}\mathcal{H}^{d-1}(S_{v_j}\cap B_1)\right)^{\frac{d}{d-1}}\underset {j\to+\infty}{\longrightarrow} 0\,.
\end{split}
\end{equation}
Let us observe that \eqref{converge}(iv) and \eqref{assv3} imply that
\begin{equation}\label{limite}
\lim_{j\to+\infty}\gamma_j\,{\mathcal L}^d(\{{\bar  v}_j\neq v_j\}\cap B_1)=0,
\end{equation}
since
\begin{equation*}
{\gamma_j\,{\mathcal L}^d(\{{\bar  v}_j\neq v_j\}\cap B_1) \leq 2\left(2\gamma_{\rm iso}{}\right)^{\frac{d}{d-1}}\varepsilon_j^{\frac{1}{d-1}}\,. }
\end{equation*}
{{\it Step 2: Modification of ${\bar v}_j$.}}
Introducing a further truncation, depending on $\varphi$ and on the ball $B_{\rho_j}(x_j)$, we modify the sequence ${\bar v}_j$ in order to extend the Sobolev-Poincar\'e inequality given by {Theorem~\ref{thm:poincsbvphi}} for
$SBV^{\varphi}$ functions with small jump set to the cases where $\varphi$ is a generalized Orlicz
function satisfying our assumptions. 

{In order to do that, we introduce the sequence
\begin{equation*}
T_j:= \frac{(\varphi_{\rho_j}^-)^{-1}(1/\rho_j^\mu)}{\sigma_j}
\end{equation*}
for some fixed $\mu\in (1,d)$. Recalling that $\psi_j^{-1}(ts)\ge \min\{s^{\frac{1}{p}}, s^{\frac{1}{q}}\}\psi_j(t)$ for every $s,t$,  we have 
$$
T_j\ge \frac{{\psi_{j}^{-1}(1/\rho_j^{\mu}})}{\sigma_j}\ge \left(\frac{\gamma_j}{\rho_j^{\mu-1}}\right)^{\frac{1}{q}}\,,
$$
so that $\displaystyle\lim_{j\to+\infty}T_j=+\infty$. Moreover, from $\varphi_{\rho_j}^-(T_j\sigma_j)=\frac{1}{\rho_j^\mu}$ we deduce that
\begin{subequations}
\begin{equation}\label{mj2} 
\lim_{j\to+\infty}\varphi_{\rho_j}^-(T_j\sigma_j)\,\rho_j^d=0,
\end{equation}
\begin{equation}\label{mj3} 
\lim_{j\to+\infty}\varphi_{\rho_j}^-(T_j\sigma_j)\,\rho_j=+\infty\,.
\end{equation}
\end{subequations}}
Correspondingly, {for every $j$} we consider the truncation 
\begin{equation*}
\hat{v}_j:= T_j \wedge { {\bar v}_j} \vee (- T_j )\,.
\label{eq:classictruncation}
\end{equation*}
By definition 
\[
|\hat{v}_j|\le T_j\ \ \Longrightarrow\ \  \varphi_{\rho_j}^-(|\hat{v}_j|\sigma_j)\le \varphi_{\rho_j}^-(T_j \sigma_j),
\]
thus, using \eqref{mj2}, the set $\left\{\varphi_{\rho_j}^-(|\hat{v}_j|\sigma_j)>\frac{1}{\mathcal{L}^d(B_{\rho_j})}\right\}\cap B_1$ is empty if $j$ is large. On the other hand, on the set 
$S_j=\left\{\varphi_{\rho_j}^-(|\hat{v}_j|\sigma_j)<\omega(\rho_j)\right\}\cap B_1$, one has
$$
\min\{(|\hat{v}_j|\sigma_j)^p, (|\hat{v}_j|\sigma_j)^q\}\le \omega(\rho_j)Lq\le 1,
$$ 
for $j$ large enough.
Taking these facts into account, and using assumption \vauno{} {on the set $B_1\backslash S_j $},  for {$r>1$ as in  \eqref{converge}(ii) we have } 
\[
\begin{split}
\int_{B_1}\varphi_j^+(|\hat{v}_j|)^r\,\d y&= \int_{S_j}\varphi_j^+(|\hat{v}_j|)^r\,\d y+\int_{B_1\setminus S_j}\varphi_j^+(|\hat{v}_j|)^r\,\d y\\
&\le  (\omega(\rho_j)\,Lq)^{\frac{rp}{q}}\mathcal{L}^d(B_1)+2^r\int_{B_1}\varphi_j^-(|\hat{v}_j|)^r\,\d y\\
&\le (\omega(\rho_j)\,Lq)^{\frac{rp}{q}}\mathcal{L}^d(B_1)+2^r\int_{B_1}\varphi_j^-(|{\bar v}_j|)^r\,\d y\le C
\end{split}
\]
{for $j$ large enough. }
Moreover, {with Chebychev inequality,} 
\[
\gamma_j\,\mathcal{L}^d(\{\hat{v}_j\neq {\bar v}_j\}\cap B_1)\le \gamma_j\,\mathcal{L}^d(\{|{\bar v}_j|\ge T_j\}\cap B_1)\le\frac{C}{\rho_j\varphi_{\rho_j}^-(T_j \sigma_j)}\underset {j\to+\infty}{\longrightarrow} 0,
\]
thanks to \eqref{mj3}, concluding that the sequence $(\hat{v}_j)_{j\in\N}$ satisfies
\begin{equation}\label{convergetronc}
\begin{split}
 &{\rm (i)} \,\, \hat{v}_j \to v_0 \quad  \mathcal{L}^d-\hbox{a.e. in $B_1$}\,,\\
 &{\rm (ii)} \, {\mbox{there exists $j_0$: } } \,\, \int_{B_1} \varphi_j^+(|\hat{v}_j|)^r\mathrm{d}y\le C,\,\, {\mbox{for $j\geq j_0$}} 
\\
&{\rm (iii)}  \,\, \nabla\hat{v}_j\rightharpoonup \nabla v_0\quad \hbox{weakly in $L^1(B_1)$}\,,\\
&{\rm (iv)} \,\, \lim_{j\to+\infty}\gamma_j\,{\mathcal L}^d(\{\hat{v}_j\neq {\bar v}_j\}\cap B_1)=0\,.
\end{split}
\end{equation}
\noindent
{\it Step 3:  The limit map.} We now prove that the function $\varphi_j(y, t)$ converges to an  $N$-function $\varphi_\infty(t)$ 
uniformly on $B_1\times K$, where $K$ is any compact set in {$[0,+\infty)$}. Moreover, $\varphi_\infty$ is a {$C^1([0,+\infty))$} function such that $\varphi'_\infty$ satisfies \inc{p-1} and \dec{q-1}.
Setting
\begin{equation*}
{{\widetilde \psi}_j(t):= \gamma_j\rho_j \psi_j(t\sigma_j) =\gamma_j\rho_j\varphi\left(x_j,t \sigma_j\right)\,, }
\label{eq:scaledftilde}
\end{equation*}
we start by proving that, { with fixed $R>0$, there exists $j_1\geq1$ such that} 
\begin{equation}\label{step1}
|\varphi_j(y,t)-{\widetilde \psi}_j(t)|\le \omega_{j,R}, \quad  {\mbox{for every $(y,t)\in B_1\times [0,R]$, for $j\geq j_1$, }}
\end{equation}
for some 
$\omega_{j,R}$ which is infinitesimal as $j\to+\infty$. In fact, we observe that
\[
\begin{split}
|\varphi_j(y,t)-{\widetilde \psi}_j(t)|&= \gamma_j\rho_j\left|\varphi(x_j+\rho_jy,t\sigma_j)-\varphi(x_j,t\sigma_j)\right|\\
&\le \gamma_j\rho_j\,\omega(\rho_j)\varphi^-_{\rho_j}(t\sigma_j),\\
\end{split}
\]
if $\varphi_{\rho_j}^-(t\sigma_j)\in [\omega(\rho_j),{\textstyle\frac{1}{\mathcal{L}^d(B_{\rho_j})}}]$, thanks to  \ref{hptre}. Recalling the definitions of $\varphi_{\rho_j}^-$ and $\psi_j$, together with \eqref{cons2}, the last term can be estimated as
\[
\gamma_j\rho_j\,\omega(\rho_j)\varphi^-_{\rho_j}(t\sigma_j)\le \gamma_j\rho_j\,\omega(\rho_j)\psi_j(t\sigma_j)\le{\max\{R^p,R^q\}\,\omega(\rho_j)} \,. 
\]
On the other hand, if $\varphi^-_{\rho_j}(t\sigma_j)<\omega(\rho_j)$, from \eqref{cons2} and \eqref{v0phi}, we deduce 
\[
{\min\{(t\sigma_j)^p, (t\sigma_j)^q\}\le Lq \omega(\rho_j)\leq 1, \quad \mbox{ for $j$ large enough, }}
\]
entailing
\[
{t\sigma_j\le (Lq \omega(\rho_j))^{\frac{1}{q}}.}
\]
Then
\[
{|\varphi_j(y,t)-{\widetilde \psi}_j(t)|\le \gamma_j\rho_j 2(Lq \omega(\rho_j))^{\frac{p}{q}}\frac{L}{p}\lesssim \varepsilon_j.  }
\]
Finally, case $\varphi^-_{\rho_j}(t\sigma_j)>\frac{1}{\mathcal{L}^d(B_{\rho_j})}$ cannot occur for $j$ large enough since, {taking into account \eqref{cons2} and \eqref{eq:sigmaj},} it  would lead to  
\[
\frac{1}{\mathcal{L}^d(B_{\rho_j})}<\psi_j(t\sigma_j)\le\max\{t^p,t^q\}\frac{1}{\gamma_j\rho_j}\le\max\{R^p,R^q\}\frac{\varepsilon_j}{\rho_j},
\]
that is 
\[
\frac{1}{\rho_j^{d-1}}< \max\{R^p,R^q\}\kappa_d\,\varepsilon_j,
\]
{which clearly would give a contradiction for $j$ large.}

{Therefore, \eqref{step1} is proven { with $\omega_{j,R}:= \max\left\{\max\{R^p,R^q\}\,\omega(\rho_j), 2(Lq\omega(\rho_j))^{\frac{p}{q}}\frac{L}{p}\varepsilon_j \right\}$ for $j$ large enough}. }

Now, {recalling \eqref{eq:sigmaj}}, thanks to Lemma~\ref{lem:lemma3.2fmt}, applied with {$\beta_j:=\sigma_j$} to the sequence 
{
\begin{equation*}
{\widetilde\psi}_j(t) =\frac{\psi_j(t\sigma_j)}{\psi_j(\sigma_j)} \,,
\end{equation*}
}
we may conclude that up to a subsequence 
{\begin{equation}
\varphi_j(y,t)\to \varphi_\infty(t) \,\, \mbox{ uniformly on $B_1\times K$, where {$K\subset [0,+\infty)$} is compact, }
\label{eq:unifconvphij}
\end{equation}}
for a $C^1([0,+\infty))$ convex function $\varphi_\infty$ such that $\varphi'_\infty$ satisfies \inc{p-1} and \dec{q-1}. \\
\noindent
{\it Step 4: Lower semicontinuity.}  We next turn our attention to the proof of the following lower semicontinuity result:
\begin{equation}\label{lsc}
\displaystyle{\int_{B_1}\varphi_\infty(|\nabla v_0|)\,\d y\le \liminf_{j\to+\infty}\int_{B_1}\varphi_j(y,|\nabla {\hat v}_j|)\,\d y}.
\end{equation}
{We may follow the argument of \cite[Proposition 7.3]{ars}, and even if only a few changes are significant, we prefer to give the details of the proof for the sake of completeness. }

{ First, we notice that it will suffice to show the above inequality for $\nabla {\hat v}_j^T \rightharpoonup \nabla v_0^T$ weakly in $L^1$, where ${\hat v}_j^T$ and $v_0^T$ are the classical truncations at level $T>0$ of ${\hat v}_j$ and $v_0$, respectively, since $\varphi_j(y,0)=0$  on $B_1$. So, up to possibly replacing each ${\hat v}_j$ with the corresponding truncation, we can suppose that $({\hat v}_j)_j$ is bounded uniformly with respect to $j$; namely,
\begin{equation*}
\|{\hat v}_j\|_\infty \leq T\,, \quad \mbox{ for every $j$.}
\end{equation*} }
Thanks to {the first bound in \eqref{assv3}}, we apply Corollary \ref{C:MaxOpBnd} to $\varphi_j^-$, which is a weak $\Phi$-function satisfying \inc{p} and \dec{q}, obtaining
\begin{equation}\label{maxvj}
\int_{B_1}\varphi_j^-(M|\nabla {\hat v}_j|)\,\d y\le \int_{B_1}\varphi_j^-(M|\nabla v_j|)\,\d y\le C,
\end{equation}
having taken into account that, thanks to \eqref{eq:stimevarphij}, $(\varphi_j^-)^{-1}(1)\simeq 1$, and the hidden constants do not depend on $j$. 
{By Chacon's Biting Lemma (see, e.g., \cite[Lemma~5.32]{Ambrosio-Fusco-Pallara:2000}) there exist a sequence of Borel subsets $A_h$ of $B_1$ such that $\mathcal{L}^d(A_h)\to0$ as $h\to+\infty$, and a (not relabelled) subsequence such that $(\varphi_j^-(M|\nabla {\hat v}_j|)\chi_{B_1\setminus A_h})_j$ is equintegrable for every $h\geq1$. }

Let $\lambda>1$. Then, applying Theorem~\ref{Lusin} to ${\hat v}_j$, we find $v_j^\lambda:B_1\to\mathbb{R}$ such that
\begin{equation}
{\rm Lip}(v_j^\lambda)\le c\,\lambda\quad \hbox{ and }\quad v_j^\lambda={\hat v}_j \hbox{ in }B_1\setminus E_j^\lambda,
\label{eq:liptrunc}
\end{equation}
where $E_j^\lambda := \{M|D{\hat v}_j|>\lambda\}$ and 
\begin{equation}\label{ejlambda}
{\mathcal L}^d(E_j^\lambda\setminus A)\le 2\frac{c\,T}{\lambda}\mathcal{H}^{d-1}(S_{\hat{v}_j}) + \frac{1}{\varphi_j^-(\lambda)}\int_{\{M|\nabla {\hat v}_j|>\lambda\}\setminus A}\varphi_j^-(M |\nabla {\hat v}_j|)\,\d y,
\end{equation}
for any Borel set $A\subset B_1$.
Moreover, from Chebychev inequality, \eqref{maxvj}, {\inc{p} for $\varphi_j^-$, and the fact that by \eqref{eq:stimevarphij}, $\varphi_{j}^-(1)\gtrsim 1$ for $j$ large enough,} we deduce
\begin{equation}\label{misura}
{\mathcal{L}^d(\{M|\nabla {\hat v}_j|>\lambda\})\le 
\frac{1}{\varphi_j^-(1)\lambda^p}\int_{\{M|\nabla {\hat v}_j|>\lambda\}}\varphi_j^-(M|\nabla {\hat v}_j|)\,\d y\le \frac{C}{\lambda^p}\,, }
\end{equation}
for $j$ large enough. 
We compute
\[
\begin{split}
\int_{B_1}&\varphi_j(y,|\nabla {\hat v}_j|)\,\d y\ge\int_{B_1\setminus(A_h\cup E_j^\lambda)}\varphi_j(y,|\nabla v_j^\lambda|)\,\d y=\int_{B_1\setminus A_h}\varphi_j(y,|\nabla v_j^\lambda|)\,\d y\\
&-\int_{E_j^\lambda\setminus A_h}\varphi_j(y,|\nabla v_j^\lambda|)\,\d y=\int_{B_1\setminus A_h}\left[\varphi_j(y,|\nabla v_j^\lambda|)-\varphi_\infty(|\nabla v_j^\lambda|)\right]\d y\\
&+\int_{B_1\setminus A_h}\varphi_\infty(|\nabla v_j^\lambda|)\,\d y-\int_{E_j^\lambda\setminus A_h}\varphi_j(y,|\nabla v_j^\lambda|)\,\d y.
\end{split}
\]
Since the convergence \eqref{eq:unifconvphij} implies
\[
\lim_{j\to+\infty}\int_{B_1\setminus A_h}\left[\varphi_j(y,|\nabla v_j^\lambda|)-\varphi_\infty(|\nabla v_j^\lambda|)\right]\,\d y = 0\,,
\]
passing to the liminf in the previous inequality we obtain
\begin{equation}\label{liminf}
\liminf_{j\to+\infty}\int_{B_1}\varphi_j(y,|\nabla {\hat v}_j|)\,\d y\ge \liminf_{j\to+\infty}\int_{B_1\setminus A_h}\varphi_\infty(|\nabla v_j^\lambda|)\,\d y-
\limsup_{j\to+\infty}\int_{E_j^\lambda\setminus A_h}\varphi_j(y,|\nabla v_j^\lambda|)\,\d y.
\end{equation}
We are first dealing with the second term. We have
\[
\int_{E_j^\lambda\setminus A_h}\varphi_j(y,|\nabla v_j^\lambda|)\,\d y\le \int_{E_j^\lambda\setminus A_h}\varphi^+_j(|\nabla v_j^\lambda|)\,\d y.
\]
{In $E_j^\lambda\setminus A_h$ we distinguish between the points of $B_1$ where $\varphi_{\rho_j}^-(|\nabla{v}_j^\lambda|\sigma_j)\in[\omega(\rho_j),1/\mathcal{L}^d(B_{\rho_j})]$, denoting the corresponding set by $S_{j,\lambda}^1$, and the points where that condition does not hold. We then define
\begin{equation*}
S_{j,\lambda}^2:= \left\{\varphi_{\rho_j}^-(|\nabla{v}_j^\lambda|\sigma_j)<\omega(\rho_j)\right\}\cap B_1 \quad \mbox{ and }\quad S_{j,\lambda}^3:=\left\{\varphi_{\rho_j}^-(|\nabla{v}_j^\lambda|\sigma_j)>1/\mathcal{L}^d(B_{\rho_j})\right\}\cap B_1.
\end{equation*}
The set $S_{j,\lambda}^3$ has to be empty for $j$ sufficiently large, as otherwise, using \eqref{cons2} for any fixed point therein, the resulting inequality $\frac{\lambda^q}{\gamma_j\rho_j}>\frac{1}{\kappa_d\rho_j^d}$ would imply $\frac{\gamma_j}{\rho_j^{d-1}}$ uniformly bounded with respect to $j$. }

\noindent In {$S_{j,\lambda}^2$}, thanks to \eqref{cons2} and \eqref{v0phi}, $\min\{(|\nabla{v}_j^\lambda|\sigma_j)^p,(|\nabla{v}_j^\lambda|\sigma_j)^q\}\le Lq\omega(\rho_j)\leq 1$ {for $j$ large enough}, then 
\[
\begin{split}
&\int_{(E_j^\lambda\setminus A_h)\cap S_{j,\lambda}^2}\varphi^+_j(|\nabla v_j^\lambda|)\,\d y\\
&\le \gamma_j\rho_j\int_{(E_j^\lambda\setminus A_h)\cap S_{j,\lambda}^2}\max\{(|\nabla{v}_j^\lambda|\sigma_j)^p,(|\nabla{v}_j^\lambda|\sigma_j)^q\}\varphi_{\rho_j}^+(1)\,\d y\\
&\le \kappa_d {(Lq\omega(\rho_j))^{\frac{p}{q}}}\frac{L}{p}\gamma_j\rho_j \underset {j\to+\infty}{\longrightarrow} 0.
\end{split}
\]
In {$S_{j,\lambda}^1$ condition \vauno{} holds,}  then
\[
\begin{split}
\int_{(E_j^\lambda\setminus A_h)\cap S_{j,\lambda}^1}\varphi^+_j(|\nabla v_j^\lambda|)\,\d y&\le 2\int_{(E_j^\lambda\setminus A_h)\cap S_{j,\lambda}^1}\varphi^-_j(|\nabla v_j^\lambda|)\,\d y\le c\,\varphi_j^-(\lambda)\,\mathcal{L}^d(E_j^\lambda\setminus A_h)\\
&\le c\frac{\varphi_j^-(\lambda)}{\lambda}\mathcal{H}^{d-1}(S_{v_j})+c\int_{\{M|\nabla{\hat v}_j|>\lambda\}\setminus A_h}\varphi_j^-(M|\nabla {\hat v}_j|)\,\d y\\
&\le c\lambda^{q-1}\mathcal{H}^{d-1}(S_{v_j})+c\int_{\{M|\nabla{\hat v}_j|>\lambda\}\setminus A_h}\varphi_j^-(M|\nabla {\hat v}_j|)\,\d y,
\end{split}
\]
where we used {\eqref{eq:liptrunc},} \eqref{ejlambda}, \eqref{eq:stimevarphij}. From the equiintegrability of the functions $\varphi_j^-(M|\nabla {\hat v}_j|)$ in ${B_1\setminus A_h}$ and from \eqref{misura}, given $\eta>0$, we fix $\lambda=\lambda(\eta)$ sufficiently large in order that 
\begin{equation}\label{piccolo}
c\int_{\{M|\nabla{\hat v}_j|>\lambda\}\setminus A_h}\varphi_j^-(M|\nabla {\hat v}_j|)\,\d y<\eta.
\end{equation}
Therefore we can state that 
\begin{equation*}
\limsup_{j\to+\infty}\int_{E_j^\lambda\setminus A_h}\varphi_j(y,|\nabla v_j^\lambda|)\,\d y< \eta.
\end{equation*}
Concerning the first term in \eqref{liminf}, for the above fixed $\lambda=\lambda(\eta)$, the sequence $(v_j^\lambda)_j$ is equibounded in $W^{1,\infty}(B_1)$, therefore, up to a subsequence, it converges to a function $v^\lambda$ weakly$^*$ in $W^{1,\infty}(B_1)$ and in measure. Moreover, by the lower semicontinuity under convergence in measure of the map
\[
w\mapsto\mathcal{L}^d(\{x\in B_1\setminus A_h : w(x)\neq 0\}),
\]
then
\begin{equation}\label{misura2}
\begin{split}
\lambda^p\mathcal{L}^d(\{x\in B_1\setminus A_h : v^\lambda\neq v_0\})&\le\liminf_{j\to+\infty}\lambda^p\mathcal{L}^d(\{x\in B_1\setminus A_h : v_j^\lambda\neq {\hat v}_j\})\\
&\le \liminf_{j\to+\infty}\lambda^p\mathcal{L}^d(E_j^\lambda\setminus A_h)\\
&\le \liminf_{j\to+\infty} \frac{\lambda^p}{\varphi_j^-(\lambda)}\int_{\{M|\nabla {\hat v}_j|>\lambda\}\setminus A_h}\varphi_j^-(M |\nabla {\hat v}_j|)\,\d y\\
&\le \liminf_{j\to+\infty}\frac{1}{\varphi_j^-(1)} \int_{\{M|\nabla {\hat v}_j|>\lambda\}\setminus A_h}\varphi_j^-(M |\nabla {\hat v}_j|)\,\d y\\
&\le c \liminf_{j\to+\infty} \int_{\{M|\nabla {\hat v}_j|>\lambda\}\setminus A_h}\varphi_j^-(M |\nabla {\hat v}_j|)\,\d y\le c\,\eta,
\end{split}
\end{equation}
using that $\varphi_j^-$ satisfies \inc{p}, the bound from above of $\varphi_j^-(1)$, obtained in \eqref{a01} and \eqref{a02}, and \eqref{piccolo}. 
All things considered, {setting $C_s:=\{x\in B_1:\,\, |\nabla v_0(x)|\le s\}$,} from \eqref{liminf} we derive 
\[
\begin{split}
\liminf_{j\to+\infty}\int_{B_1}\varphi_j(y,|\nabla {\hat v}_j|)\,\d y &\ge \int_{B_1\setminus A_h}\varphi_\infty(|\nabla v^\lambda|)\,\d y-\eta\\
&\ge \int_{(B_1\setminus A_h)\cap\{v^\lambda=v_0\}\cap C_s}\varphi_\infty(|\nabla v_0|)\,\d y-\eta \\
&=\int_{(B_1\setminus A_h)\cap C_s}\varphi_\infty(|\nabla v_0|)\,\d y -\int_{(B_1\setminus A_h)\cap\{v^\lambda\neq v_0\}\cap C_s}\varphi_\infty(|\nabla v_0|)\,\d y-\eta \\
& \ge \int_{(B_1\setminus A_h)\cap C_s}\varphi_\infty(|\nabla v_0|)\,\d y - \varphi_\infty(s)\mathcal{L}^d(\{x\in B_1\setminus A_h : v^\lambda\neq v_0\})-\eta \\
&\ge \int_{(B_1\setminus A_h)\cap C_s}\varphi_\infty(|\nabla v_0|)\,\d y-\varphi_\infty(s) c\,\eta-\eta,
\end{split}
\]
where we used \eqref{misura2} in the last inequality. Thus, letting first $\eta$ tend to zero, then $h$ and finally $s$ tend to infinity, we proved \eqref{lsc}. In the same vein, \eqref{lsc} holds in every ball $B_\rho$, for $\rho\in (0,1]$, of course. \\
\noindent
{\it Step 5: Asymptotics.} Now we integrate  ${\mathcal H}^{d-1}(\{{\tilde{\hat v}}_j\neq\tilde{v}_j\}\cap\partial B_\rho)$ with respect to $\rho$, and using coarea formula, \eqref{limite}, and \eqref{convergetronc}(iv), we obtain
\[
\gamma_j\int_0^1{\mathcal H}^{d-1}(\{{\tilde{\hat v}}_j\neq\tilde{v}_j\}\cap\partial B_\rho)\,\mathrm{d}\rho=\gamma_j\,{\mathcal L}^d( \{{\hat v}_j\neq v_j\}\cap B_1 )\underset {j\to+\infty}{\longrightarrow} 0.
\]
Then, up to a subsequence, we may assume that, for almost every $\rho\in (0,1)$,
\begin{equation}\label{limit}
\lim_{j\to+\infty}\gamma_j\,{\mathcal H}^{d-1}(\{{\tilde{\hat v}}_j\neq\tilde{v}_j\}\cap\partial B_\rho)=0.
\end{equation}
Since for any $j$ and for ${\mathcal L}^1$ -a.e. $\rho\in (0,1)$, ${\mathcal H} ^{d-1}(S_{{\hat v}_j}\cap\partial B_\rho)=0$, we can apply {a straightforward adaptation  of \cite[Lemma 7.3]{Ambrosio-Fusco-Pallara:2000} to our setting }
which gives
\begin{equation}\label{eq:7.3h}
F_j( v_j,\gamma_j, B_\rho) \le  F_j({\hat v}_j,\gamma_j,B_\rho)  + \gamma_j\mathcal{H}^{d-1}(\{{\tilde{\hat v}}_j\neq{\tilde v_j}\}\cap\partial B_\rho) +{\rm Dev}_j(v_j,\gamma_j,B_1)\,,
\end{equation}
and
\begin{equation}\label{eq:7.4h}
\begin{split}
{\rm Dev}_j({\hat v}_j,\gamma_j, B_\rho)&\le F_j({\hat v}_j,\gamma_j,B_\rho)-F_j(v_j,\gamma_j,B_\rho)+\gamma_j\mathcal{H}^{d-1}(\{{\tilde{\hat v}}_j\neq{\tilde v_j}\}\cap\partial B_\rho)\\
&+{\rm Dev}_j(v_j,\gamma_j,B_{1})\,.
\end{split}
\end{equation}
Moreover, taking into account that ${\hat v}_j$ is a truncation of $v_j$ and that $\varphi_j(\cdot,0)=0$ we also have 
\begin{equation}\label{eq:7.5h}
F_j({\hat v}_j,\gamma_j,B_\rho)\le F_j(v_j,\gamma_j,B_\rho).
\end{equation}
Thus, if we set for all $\rho<1$,
\[
\alpha(\rho):=\lim_{j\to+\infty}F_j(v_j,\gamma_j,B_\rho)\,,
\]
which exists, up to a subsequence (not relabeled), by virtue of Helly's Selection Theorem since the function $\rho\mapsto F_j(v_j,\gamma_j,B_\rho)$ is increasing and equibounded, thanks to \eqref{eq:7.3h}, \eqref{eq:7.5h}, \eqref{limit}, and \eqref{assv1}, we may conclude that for ${\mathcal L}^1$-a.e. $\rho\in(0,1)$,
\begin{equation}\label{anche}
\alpha(\rho)=\lim_{j\to+\infty}F_j({\hat v}_j,\gamma_j,B_\rho).
\end{equation}
From this, {the second inequality in \eqref{assv1}, \eqref{limit}} and \eqref{eq:7.4h} we also have that
\begin{equation}
\lim_{j\to+\infty}{\rm Dev}_j({\hat v}_j,\gamma_j,B_\rho)=0\,.
\label{anche2}
\end{equation}
{Now, the sequence of Radon measures 
\[
\mu_j:=\varphi_j(\cdot,|\nabla {\hat v}_j|){\mathcal L}^d+\gamma_j{\mathcal H}^{d-1}\res{S_{{\hat v}_j}}
\]
is equibounded in mass on $B_1$ in view of \eqref{assv1}, so we can find a Radon measure $\mu$ on $B_1$ such that
\begin{equation}
\mu_j\rightharpoonup^*\mu \quad \mbox{ on $B_1$}
\label{eq:weakstarconv}
\end{equation}
up to a subsequence (not relabeled). } \\
\\
{\it Step 6: Final comparison and conclusion.}  To derive the desired contradiction we let  $v\in W^{1,\varphi_\infty}(B_1)$ be such that $\{v\neq v_0\}\ssubset B_1$. {We also consider a sequence $(v^\varepsilon)_{\varepsilon>0}\subset W^{1,\infty}(B_1)$ of regularizations of $v$, strongly converging to $v$  in $W^{1,{\varphi_\infty}}(B_1)$ as $\varepsilon\to0$ (which exists since $\varphi_\infty$ satisfies \dec{q}, see, e.g., \cite[Lemma 6.4.5]{HH})}. 

Let $\rho<\rho'\in (0, 1)$, with $\rho'$ such that \eqref{anche} holds, $\mu(\partial B_{\rho'})=\mu(\partial B_\rho)=0$ and $\{v\neq u_0\}\ssubset B_{\rho}$. Let $\phi\in C^\infty_c(B_{\rho'})$ be such that $\phi = 1$ on $B_\rho$, { $0\leq \phi\leq1$, $|\nabla \phi|\leq \frac{2}{\rho'-\rho}$,} and define
$\zeta_j= \phi v^\varepsilon+ (1-\phi) {\hat v}_j$; since $\{\zeta_j\neq {\hat v}_j\}\ssubset B_{\rho'}$, straightforward computations lead to
\begin{equation}\label{compar}
\begin{split}
F_j({\hat v}_j,\gamma_j,B_{\rho'}) & \le F_j(\zeta_j,\gamma_j,B_{\rho'})+{\rm Dev}_j({\hat v}_j,\gamma_j,B_{\rho'})\\
&\le F_j(v^\varepsilon,\gamma_j,B_\rho)+c\left[\int_{B_{\rho'}\setminus B_\rho}\left(\varphi_j(y,|\nabla v^\varepsilon|)+ 
\varphi_j\left(y,\frac{|v^\varepsilon-{\hat v}_j|}{\rho'-\rho}\right)\right)\mathrm{d}y\right]\\
&\,\,\,\,\,\, +c\,\mu_j(B_{\rho'}\setminus B_\rho) 
+{\rm Dev}_j({\hat v}_j,\gamma_j,B_{\rho'})\,,
\end{split}
\end{equation}
for a suitable constant $c\ge 1$ depending only on $L$ and $p, q$. Now we deal with the convergence of the terms inside the square bracket. Using the uniform convergence \eqref{eq:unifconvphij} 
we have that 
\[
\lim_{j\to+\infty}\int_{B_{\rho'}\setminus B_\rho}\varphi_j(y,|\nabla v^\varepsilon|)\,\d y=\int_{B_{\rho'}\setminus B_\rho}\varphi_\infty(|\nabla v^\varepsilon|)\,\d y,
\]
since $|\nabla v^\e|$ is bounded. {As for the other term, we first notice that  $\varphi_j(\cdot,t)\leq\varphi_j^+(t)$, the boundedness of $v^\e$ and \eqref{convergetronc}(ii) entail the equi-integrability of $\left\{\varphi_j\left(\cdot,\frac{|v^\varepsilon-{\hat v}_j|}{\rho'-\rho}\right)\right\}_{j\in\N}$. }
Furthermore, taking into account the  pointwise convergence of  $\varphi_j\left(y,\frac{|v^\varepsilon-{\hat v}_j|}{\rho'-\rho}\right)$ to $\varphi_\infty\left(\frac{|v^\varepsilon-{v}_0|}{\rho'-\rho}\right)$ {implied by \eqref{eq:unifconvphij}} 
we may appeal to Vitali convergence theorem, which ensures that
\[
\lim_{j\to+\infty}\int_{B_{\rho'}\setminus B_\rho}\varphi_j\left(y,\frac{|v^\varepsilon-{\hat v}_j|}{\rho'-\rho}\right)\,\d y=\int_{B_{\rho'}\setminus B_\rho}\varphi_\infty\left(\frac{|v^\varepsilon-{v}_0|}{\rho'-\rho}\right)\,\d y\,.
\]
Therefore, passing to the limit as $j\to+\infty$ in \eqref{compar}, {taking into account \eqref{anche2} and the convergence \eqref{eq:weakstarconv}, {together with the choice of $\rho$ and $\rho'$,}} we have
\[
\alpha(\rho')\le \int_{B_\rho}\varphi_\infty(|\nabla v^\varepsilon|)\,\mathrm{d}y+c\left[\int_{B_{\rho'}\setminus B_\rho}\left(\varphi_\infty(|\nabla v^\varepsilon|)+\varphi_\infty\left(\frac{|v^\varepsilon-v_0|}{\rho'-\rho}\right)\right)\mathrm{d}y\right]+c\,\mu(B_{\rho'}\setminus B_\rho)\,.
\]
Now we  let $\varepsilon\to0$ and recalling that $v=v_0$ outside $B_\rho$, we easily obtain 
\[
\alpha(\rho)\le \alpha(\rho')\le \int_{B_\rho}\varphi_\infty(|\nabla v|)\,\mathrm{d}y+c\int_{B_{\rho'}\setminus B_\rho}\varphi_\infty(|\nabla v|)\mathrm{d}y+c\,\mu(B_{\rho'}\setminus B_\rho)\,.
\]
Therefore, letting $\rho'$ tend to $\rho$ we finally get that for ${\mathcal L}^1$ -a.e. $\rho$ and any $v\in W^{1,{\varphi_\infty}} (B_1)$ such that $\{v\neq v_0\}\ssubset B_\rho$ we have 
\[
\lim_{j\to+\infty}F_j({\hat v}_j,\gamma_j,B_\rho)=\lim_{j\to+\infty}F_j({v}_j,\gamma_j,B_\rho)\le \int_{B_\rho}\varphi_\infty(|\nabla v|)\,\mathrm{d}y\,.
\]
{In particular, the previous inequality holds  for $v=v_0$, whence taking into account the lower semicontinuity result \eqref{lsc} previously obtained in Step~4, we get that} 
\[
\lim_{j\to+\infty}F_j({v}_j,\gamma_j,B_\rho)= \int_{B_\rho}\varphi_\infty(|\nabla v_0|)\,\mathrm{d}y
\]
and that $v_0$ is a local minimizer of the functional $v\mapsto\int_{B_1}\varphi_\infty(|\nabla v|)\,\mathrm{d}x$. Thanks to Proposition \ref{prop:nondip} and Remark \ref{senzac2},  $v_0$ satisfies
\[
\sup_{B_\tau}\varphi_\infty(|\nabla v_0|)\le \sup_{B_{1/2}}\varphi_\infty(|\nabla v_0|)\le C_0\-int_{B_1}\varphi_\infty(|\nabla v_0|)\,\mathrm{d}y.
\]
In conclusion 
\[
\begin{split}
\lim_{j\to+\infty}F_j(v_j,\gamma_j,B_\tau)&=\int_{B_\tau}\varphi_\infty(|\nabla v_0|)\,\mathrm{d}y\le \sup_{B_\tau}\varphi_\infty(|\nabla v_0|)\tau^d \mathcal{L}^d(B_1)\\
&\le C_0\tau^d\int_{B_1}\varphi_\infty(|\nabla v_0|)\,\mathrm{d}y\le C_0\tau^d\limsup_{j\to+\infty}F_j(v_j,\gamma_j,B_1)\,,
\end{split}
\]
which provides the contradiction to \eqref{assv2}.

\end{proof}

\subsection{Ahlfors-type regularity. Strong minimizers.} \label{sec:densitylowerb}

In order to study the regularity of the jump set $S_u$, a key tool will be an Ahlfors-type regularity result, ensuring that $F(u,B_\rho(x))$, where $B_\rho(x)$ is any ball centred at a jump point $x\in S_u$, is controlled from above and from below (see \eqref{eq:7.24AFP}). 

We first recall the definition of \emph{quasi-minimizer} (see \cite[Definition~7.17]{Ambrosio-Fusco-Pallara:2000}):
a function $u\in SBV_{\rm loc}(\Omega)$ is a quasi-minimizer of the functional $F(v,\Omega)$ if there exists a constant $\eta\geq0$ such that for all balls $B_\rho(x)\ssubset\Omega$ it holds that
\begin{equation}
{\rm Dev}(u, B_\rho(x)) \leq \eta \rho^d\,.
\label{eq:devquasim}
\end{equation}
{The class of quasi-minimizers complying with \eqref{eq:devquasim} is denoted by $\mathcal{M}_\eta(\Omega)$.}


{The upper bound in \eqref{eq:7.24AFP} follows from a standard comparison argument, and here the assumption $x\in S_u$ is, actually, not needed. }
{On the contrary, the lower bound for $F(u,B_\rho(x))$ therein requires that the small balls $B_\rho(x)$ be centred at $x\in {S_u}$.} The proof is based on the decay estimate of Lemma~\ref{lem:decay}, and it follows along the lines of the proof of \cite[Theorem~7.21]{Ambrosio-Fusco-Pallara:2000} where $p$ is constant, {or that of \cite[Theorem~4.7]{LSSV}. We then only sketch the proof, just indicating the main changes. } 

\begin{theorem}[Ahlfors-type regularity]\label{thm:mainthm} {Let $\varphi$ be a function satisfying {\rm\ref{hpuno}}, {\rm\ref{hpdue}}, {\rm\ref{hptre}}.} 
There exist $\theta_0$ and $\rho_0$ depending on $d, p, q$ with the property that if $u\in SBV(\Omega)$ is a {quasi-minimizer} of $F$ in $\Omega$, $u\in \mathcal{M}_\eta(\Omega)$, then 
\begin{equation}
{\theta_0\rho^{d-1}<F(u, B_\rho(x))\leq d\kappa_d\rho^{d-1} + {\eta\rho^d} }
\label{eq:7.24AFP}
\end{equation}
for all balls $B_\rho(x)\subset\Omega$ with centre $x\in\overline{S}_u$ and radius $\rho<\frac{\rho_0}{\eta}$. 
Moreover, 
\begin{equation}
\mathcal{H}^{d-1}((\overline{S}_u\setminus S_u)\cap \Omega)=0.
\label{eq:7.24AFPbis}
\end{equation}
\end{theorem}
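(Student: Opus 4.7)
The plan is to adapt the classical argument of \cite[Theorem~7.21]{Ambrosio-Fusco-Pallara:2000} (see also \cite[Theorem~4.7]{LSSV}) to the generalized Orlicz setting, splitting into the two inequalities of \eqref{eq:7.24AFP} and then deducing the negligibility \eqref{eq:7.24AFPbis}. For the upper bound, which does not need $x\in\overline S_u$, I test quasi-minimality against the competitor $v\equiv 0$ on $\overline{B_\rho(x)}$, $v=u$ elsewhere, on a slightly enlarged ball $B_{\rho+\delta}(x)\ssubset\Omega$. Since $\nabla v\equiv 0$ on $B_\rho(x)$ and $S_v\cap B_{\rho+\delta}(x)\subset \partial B_\rho(x)\cup\bigl(S_u\cap (B_{\rho+\delta}(x)\setminus\overline{B_\rho(x)})\bigr)$, applying \eqref{eq:devquasim}, cancelling the common contribution on the annulus and letting $\delta\downarrow 0$ yields $F(u,B_\rho(x))\le d\kappa_d\rho^{d-1}+\eta\rho^d$.

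For the lower bound I argue by contradiction, first for $x\in S_u$. Fix $\tau_0\in(0,1/2)$ with $C_1\tau_0\le 1/2$, where $C_1$ is the constant of Lemma~\ref{lem:decay}, and let $\varepsilon_\ast,\theta_\ast\in(0,1)$ be the corresponding thresholds produced by that lemma; set $\theta_0:=\varepsilon_\ast$ and $\rho_0:=\varepsilon_\ast\theta_\ast$. Assuming $F(u,B_\rho(x))\le \theta_0\rho^{d-1}$ for some $\rho<\rho_0/\eta$ and setting $r_n:=\tau_0^n\rho$, at each scale one of two alternatives arises: either (a) ${\rm Dev}(u,B_{r_n}(x))\le\eta r_n^d\le\theta_\ast F(u,B_{r_n}(x))$, in which case Lemma~\ref{lem:decay} gives $F(u,B_{r_{n+1}}(x))\le C_1\tau_0^d F(u,B_{r_n}(x))$ (so $F/r_n^{d-1}$ at least halves); or (b) the deviation fails, forcing $F(u,B_{r_n}(x))/r_n^{d-1}<(\eta/\theta_\ast)r_n\to 0$. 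The smallness $F(u,B_{r_n}(x))\le\varepsilon_\ast r_n^{d-1}$ propagates in both cases, and a careful case analysis (cf.\ the end of the proof of \cite[Theorem~7.21]{Ambrosio-Fusco-Pallara:2000}), combined with the monotonicity of $r\mapsto F(u,B_r(x))$ to pass from the geometric sequence $\{r_n\}$ to all radii, yields
\[
\lim_{r\downarrow 0}\frac{1}{r^{d-1}}\biggl(\int_{B_r(x)}|\nabla u|^p\,\mathrm dy+\mathcal H^{d-1}(S_u\cap B_r(x))\biggr)=0,
\]
using $|\xi|^p\lesssim \varphi(x,|\xi|)+1$ from \eqref{cons2}--\eqref{v0phi} to control the bulk piece. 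The second hypothesis of Theorem~\ref{thm:thm7.8AFP}, namely $\limsup_{r\downarrow 0}\dashint_{B_r(x)}|u|^q\,\mathrm dy<+\infty$, is secured by applying the Sobolev--Poincar\'e inequality of Theorem~\ref{thm:poincsbvphi} to the truncations $T_{B_r(x)}u$ and bounding the oscillation of ${\rm med}(u;B_r(x))$ across dyadic scales. Theorem~\ref{thm:thm7.8AFP} then gives $x\notin S_u$, the desired contradiction. Extension to $x\in\overline S_u$ follows by taking $S_u\ni x_j\to x$ and exploiting $B_{\rho-|x-x_j|}(x_j)\subset B_\rho(x)$.

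To prove \eqref{eq:7.24AFPbis}, set $\mu:=\varphi(\cdot,|\nabla u|)\mathcal L^d+\mathcal H^{d-1}\mres S_u$. The lower bound yields a positive upper $(d-1)$-density of $\mu$ at every $x\in\overline S_u$, so by the density comparison of \cite[Theorem~2.56]{Ambrosio-Fusco-Pallara:2000} one obtains $\mathcal H^{d-1}(\overline S_u\cap B)\lesssim \mu(B)/\theta_0$ for every ball $B\ssubset\Omega$. Hence $\mathcal H^{d-1}\mres \overline S_u$ is locally finite, which forces $\mathcal L^d(\overline S_u\cap\Omega)=0$ and therefore $\mu(\overline S_u\setminus S_u)=\int_{\overline S_u\setminus S_u}\varphi(\cdot,|\nabla u|)\,\mathrm dx=0$; a final application of the same density comparison on $\overline S_u\setminus S_u$ delivers \eqref{eq:7.24AFPbis}. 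The main technical obstacle throughout is the iterative use of Lemma~\ref{lem:decay}: while smallness propagates thanks to the choice $C_1\tau_0\le 1/2$, the deviation hypothesis forces the dichotomy (a)/(b), and upgrading the decay of $F(u,B_r(x))/r^{d-1}$ from the geometric sequence $\{r_n\}$ to arbitrary radii---needed to invoke Theorem~\ref{thm:thm7.8AFP}---is the delicate part of the argument.
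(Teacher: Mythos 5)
Your proof of the upper bound and the overall dichotomy/iteration scheme for the lower bound follow the same strategy as the paper (AFP Theorem~7.21 adapted via Lemma~\ref{lem:decay}), and your geometric-measure-theoretic deduction of \eqref{eq:7.24AFPbis} from the lower density bound is a reasonable variant of the argument the paper delegates to \cite[eq.~(4.20)]{LSSV}.

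There is, however, a genuine gap at the point where you invoke Theorem~\ref{thm:thm7.8AFP}. You claim that the second hypothesis, $\limsup_{r\downarrow 0}\dashint_{B_r(x)}|u|^q\,\mathrm dy<+\infty$, ``is secured by applying the Sobolev--Poincar\'e inequality of Theorem~\ref{thm:poincsbvphi} to the truncations $T_{B_r(x)}u$ and bounding the oscillation of ${\rm med}(u;B_r(x))$.'' This does not work: the Poincar\'e inequality controls $T_{B_r}u-\mathrm{med}(u;B_r)$, and $T_{B_r}u$ differs from $u$ on an exceptional set of small Lebesgue measure, but on that set $u$ can be arbitrarily large, so $\dashint_{B_r}|u|^q$ is not controlled. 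The set $I:=\{x:\limsup_{r\to 0}\dashint_{B_r(x)}|u|^{1^*}\,\mathrm dy=+\infty\}$ can be nonempty, and it is precisely what the paper's proof handles head-on: one first proves the lower bound only for $x\in S_u\setminus I$, then observes $\mathcal H^{d-1}(I)=0$ (\cite[Lemma~3.75]{Ambrosio-Fusco-Pallara:2000}), extends the lower bound by density to $\overline{S_u\setminus I}$, and finally shows $\overline{S_u\setminus I}=\overline S_u$ by a separate regularity step (on a neighbourhood $V$ with $\mathcal H^{d-1}(V\cap S_u)=0$ one has $u\in W^{1,p}(V)$, and the upper bound in \eqref{eq:7.24AFP} plus a Poincar\'e inequality and Campanato's characterization yield $u\in C^{0,\alpha}(V)$, forcing $x\notin\overline S_u$). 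Your sketch misses this entire Campanato step and instead leans on a claim about the medians that cannot be substantiated; if it could, the set $I$ would be superfluous and the classical proofs (in AFP, in \cite{DeGCarLea}, in \cite{LSSV}) would not need to introduce it. Your proposed extension from $x\in S_u$ to $x\in\overline S_u$ by picking $x_j\to x$ is also conditional on the lower bound having been established at \emph{all} points of $S_u$, which, again, it is not without excising $I$ first.

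A second, smaller remark: the paper's iteration uses two parameters $\tau,\sigma$ and the choice $\rho_0=\min\{1,\varepsilon(\sigma)^2,\varepsilon_0\tau^d\theta(\tau),\varepsilon_0\sigma^{d-1}\theta(\sigma)\}$, and the intermediate radius $\sigma\rho$ is essential to convert the conclusion for the geometric sequence into $\lim_{r\to 0}F(u,B_r)/r^{d-1}=0$ with quantitative control. Your single-parameter dichotomy with $\rho_0=\varepsilon_*\theta_*$ is in the right spirit and could likely be made rigorous, but you should be explicit about how the case~(b) contribution (where $\mathrm{Dev}>\theta_*F$) is bounded and why the resulting estimate still forces the limit to zero for \emph{all} radii, not just along the geometric subsequence.
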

\begin{proof}
{For the proof of the upper bound in \eqref{eq:7.24AFP} see, e.g., \cite[Lemma~7.19]{Ambrosio-Fusco-Pallara:2000}.} As for the lower bound, without loss of generality, we may assume that $x=0$. Let $0<\tau<1$ be fixed such that $\sqrt{\tau}\leq \frac{1}{C_1}$ and set $\varepsilon_0:=\varepsilon(\tau)$, where $C_1$ and $\varepsilon(\tau)$ are given from Lemma~\ref{lem:decay}. Let $0<\sigma,\rho_0<1$ be chosen such that
\begin{equation*}
{\sigma \leq \frac{\varepsilon_0}{C_1 (d\kappa_d+1)} \quad \mbox{ and }\quad \rho_0:= \min\{1,\varepsilon(\sigma)^2,\varepsilon_0\tau^d\theta(\tau), \varepsilon_0\sigma^{d-1}\theta(\sigma)\}\,, }
\end{equation*}
where $\theta(\tau)$ and $\theta(\sigma)$ are the constants of Lemma~\ref{lem:decay} corresponding to $\tau$ and $\sigma$, respectively.
{As a first step, an argument by induction as for \cite[(4.23)-(4.24)]{LSSV}  shows that if $\rho<\frac{\rho_0}{\eta}$ and $B_\rho\subset\Omega$, then
\begin{equation}
F(u, B_\rho) \leq \varepsilon(\sigma)\rho^{d-1}
\label{eq:7.25AFP}
\end{equation}
implies
\begin{equation}
F(u, B_{\sigma\tau^m\rho}) \leq \varepsilon_0 \tau^\frac{m}{2}(\sigma\tau^m\rho)^{d-1}\,,\quad \forall m\in\N \,.
\label{eq:7.26AFP}
\end{equation}}
Now, assuming \eqref{eq:7.25AFP} for some ball $B_\rho\subset\Omega$, with $\rho<\frac{\rho_0}{\eta}$, from \eqref{eq:7.26AFP} we get
\begin{equation*}
\lim_{r\to0} \frac{F(u, B_r)}{r^{d-1}}=0\,,
\end{equation*}
whence, {by using the inequality $\frac{t^{p}}{Lq} \leq 1+ \varphi^-_{B_r}(t)$, we infer
\begin{equation*}
\lim_{r\to0} \frac{1}{r^{d-1}} \int_{B_r(x)}|\nabla u|^{p}\,\mathrm{d}y=0\,.
\end{equation*}} Therefore, {Theorem~\ref{thm:thm7.8AFP} with $p$ above} and $q=1^*$ implies that $0\in I$, where
\[
I :=\left\{x\in\Omega : \limsup_{r\to 0}\-int_{B_r(x)}|u(y)|^{1^*}\,\mathrm{d}y=+\infty\right\}.
\]
Then \eqref{eq:7.24AFP} holds true for all $x\in S_u\setminus I$, {by a density argument, the inequality is still true for balls centred in  $x\in \overline{S_u\setminus I}$. } 
We are left to prove that $\overline{S_u\setminus I}=\overline{S}_u$.
Let $x\not\in\overline{S_u\setminus I}$; we first observe that the set $I$ is $\mathcal{H}^{d-1}$-negligible (see \cite[Lemma 3.75]{Ambrosio-Fusco-Pallara:2000}. Thus, we can find a neighborhood $V$ of $x$ such that $\mathcal{H}^{d-1}(V\cap S_u)=0$, and this implies in a standard way that $u\in W^{1,p}(V)$. {Now, by virtue of the classical Poincar\'e inequality for Sobolev functions, the upper bound in \eqref{eq:7.24AFP} 
and the Campanato's characterization of H\"older continuity (see, e.g., \cite{Camp1}), we infer that a representative of $u$ belongs to $C^{0,\alpha}(V)$, where $\alpha:=\frac{p-1}{p}$. Therefore, $x\not\in \overline{S}_u$, and this concludes the proof of \eqref{eq:7.24AFP}. }

As for \eqref{eq:7.24AFPbis}, it follows from \eqref{eq:7.24AFP} by a geometric measure theory argument. Let us define the set 
\begin{equation*}
\Sigma:=\left\{x\in\Omega:\,\,\mathop{\lim\sup}_{r\to0} \frac{1}{r^{d-1}}\int_{B_r(x)}\varphi(y,|\nabla u|)\,\mathrm{d}y>0\right\}\,.
\label{eq:setsigma}
\end{equation*}
{Since $\varphi(\cdot,|\nabla u|)\in L^1_{\rm{loc}}(\Omega)$,} it holds that $\mathcal{H}^{d-1}(\Sigma)=0$  (see, e.g., \cite[Section~2.4.3, Theorem~3]{EG}). {The proof of \eqref{eq:7.24AFPbis} then goes exactly as for the proof of \cite[eq. (4.20)]{LSSV}, we then omit the details.}

\end{proof}

{We are now in position to prove an existence result for minimizers of free-discontinuity functionals of the form
\begin{equation}
\mathcal{F}(u):= \int_\Omega \varphi(x,|\nabla u|)\,\mathrm{d}x + \alpha\int_\Omega|u-g|^q\,\mathrm{d}x +  \mathcal{H}^{d-1}(S_u\cap\Omega)\,, 
\label{eq:functionals}
\end{equation}
defined for $u\in SBV^{\varphi}(\Omega)$, where $\alpha>0$, $q\geq1$, and $g\in L^\infty(\Omega)$, which are the weak formulation of
\begin{equation}
\mathcal{G}(K,u):= \int_{\Omega\setminus K} \varphi(x,|\nabla u|)\,\mathrm{d}x + \alpha\int_{\Omega\setminus K}|u-g|^q\,\mathrm{d}x +  \mathcal{H}^{d-1}(K\cap\Omega)
\label{eq:functionalsG}
\end{equation}
where $u\in W^{1,\varphi}(\Omega\setminus K)$ and $K\subset\R^d$ is a closed set.}


{First, we notice that in order to minimize $\mathcal{F}$, we may restrict to those $u\in SBV(\Omega)$ such that $\|u\|_\infty\leq \|g\|_\infty<\infty$. Indeed, since the integrand $\varphi$ does not depend on $u$ and $t\mapsto\varphi (\cdot,t)$ is non-decreasing, it is immediate to check that $\mathcal{F}(u)$ is non increasing by truncations.} 

\begin{theorem}\label{thm:main}
Let $\varphi$ comply with the assumptions of Theorem~\ref{thm:mainthm}. Then there exists a minimizer $u\in SBV(\Omega)\cap L^\infty(\Omega)$ of $\mathcal{F}$ defined in \eqref{eq:functionals}. Moreover, the pair $(\overline{S}_u,u)$ is a (strong) minimizer of the functional $\mathcal{G}$ \eqref{eq:functionalsG}.
\end{theorem}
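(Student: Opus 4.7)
My plan is to combine a direct-method argument for weak existence with the Ahlfors-type regularity of Theorem~\ref{thm:mainthm} in order to upgrade the weak minimizer to a strong one.

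\emph{Weak existence.} Let $\{u_n\}\subset SBV^{\varphi}(\Omega)$ be a minimizing sequence for $\mathcal{F}$. Since $\varphi(x,\cdot)$ is non-decreasing and $g\in L^\infty(\Omega)$, the truncation remark preceding the statement lets us assume $\|u_n\|_\infty\le\|g\|_\infty$. Assumption \ref{hpdue} and Proposition~\ref{prop:properties} give $\varphi(x,t)\ge C t^p$ for $t\ge 1$, so the uniform bound on $\int_\Omega\varphi(x,|\nabla u_n|)\,dx$ yields a uniform $L^p$-bound on $\{\nabla u_n\}$, while $\mathcal{H}^{d-1}(S_{u_n})$ is also uniformly bounded. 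Ambrosio's $SBV$ compactness theorem \cite[Theorem~4.7 and 4.8]{Ambrosio-Fusco-Pallara:2000} then supplies a subsequence $u_n\to u$ in $L^1(\Omega)$ and $\mathcal{L}^d$-a.e., with $\nabla u_n\rightharpoonup\nabla u$ weakly in $L^p(\Omega;\R^d)$ and $\mathcal{H}^{d-1}(S_u)\le\liminf_n\mathcal{H}^{d-1}(S_{u_n})$. Convexity in $\xi$ of the Carath\'eodory integrand $(x,\xi)\mapsto\varphi(x,|\xi|)$ and De~Giorgi's theorem deliver the lower semicontinuity of the bulk term, whereas dominated convergence handles the equibounded fidelity term. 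Hence $u\in SBV(\Omega)\cap L^\infty(\Omega)$ is a minimizer of $\mathcal{F}$.

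\emph{Quasi-minimality.} I then verify that $u\in\mathcal{M}_\eta(\Omega)$ with $\eta:=\alpha(2\|g\|_\infty)^q\kappa_d$. For any $v\in SBV_{\rm loc}(\Omega)$ with $\{v\neq u\}\ssubset A\ssubset B_\rho(x_0)\ssubset\Omega$, let $\tilde v$ be the truncation of $v$ at level $\|g\|_\infty$: truncation does not increase either the bulk or the surface term of $F$, and since $\|u\|_\infty\le\|g\|_\infty$ one still has $\{\tilde v\neq u\}\subset A$. Minimality of $u$, the bound $|\tilde v-g|^q\le(2\|g\|_\infty)^q$, and $\mathcal{L}^d(A)\le\kappa_d\rho^d$ then yield
\begin{equation*}
F(u,1,A)\le F(\tilde v,1,A)+\alpha(2\|g\|_\infty)^q\kappa_d\rho^d\le F(v,1,A)+\eta\rho^d,
\end{equation*}
which is \eqref{eq:devquasim}. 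Theorem~\ref{thm:mainthm} therefore applies and gives $\mathcal{H}^{d-1}((\overline{S}_u\setminus S_u)\cap\Omega)=0$.

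\emph{Strong minimality.} Set $K:=\overline{S}_u$: this is relatively closed in $\Omega$ with $\mathcal{H}^{d-1}(K)=\mathcal{H}^{d-1}(S_u)$. Since $S_u\subset K$, the restriction $u|_{\Omega\setminus K}$ has no jumps and belongs to $W^{1,\varphi}(\Omega\setminus K)$, so $(K,u)$ is admissible for $\mathcal{G}$ and $\mathcal{G}(K,u)=\mathcal{F}(u)$. Conversely, given any admissible $(K',v)$ for $\mathcal{G}$ with $\mathcal{G}(K',v)<\infty$, truncation of $v$ at level $\|g\|_\infty$ (which does not increase $\mathcal{G}(K',v)$) allows us to assume $\|v\|_\infty\le\|g\|_\infty$; then the extension $\tilde v:=v\,\chi_{\Omega\setminus K'}$ belongs to $SBV(\Omega)\cap L^\infty(\Omega)$ with $S_{\tilde v}\subset K'$ up to $\mathcal{H}^{d-1}$-null sets, so $\mathcal{F}(\tilde v)\le\mathcal{G}(K',v)$. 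Weak minimality of $u$ finally yields
\begin{equation*}
\mathcal{G}(K,u)=\mathcal{F}(u)\le\mathcal{F}(\tilde v)\le\mathcal{G}(K',v).
\end{equation*}
The substantive content of the result is encapsulated in Theorem~\ref{thm:mainthm}; at this stage the only conceptual step is to recognize the weak minimizer as a quasi-minimizer, which reduces to an elementary truncation computation made possible by the $L^\infty$-bound on $u$.
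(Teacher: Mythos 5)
Your proof is correct and follows the same overall structure as the paper's (which only sketches the argument, relegating the details of the weak existence and of the strong-minimality comparison to standard references and to the analogous proof in \cite{LSSV}): direct method plus Ambrosio compactness for weak existence, a truncation argument giving quasi-minimality with the same constant $\eta=2^q\alpha\kappa_d\|g\|_\infty^q$, and Theorem~\ref{thm:mainthm} to identify $\overline{S}_u$ as the essential closure of the jump set before the final comparison $\mathcal{G}(\overline{S}_u,u)=\mathcal{F}(u)\le\mathcal{F}(\tilde v)\le\mathcal{G}(K',v)$. The only step you state without citation that deserves a pointer is the fact that a bounded Sobolev function on $\Omega\setminus K'$ with $K'$ relatively closed and $\mathcal{H}^{d-1}(K'\cap\Omega)<\infty$ extends to an $SBV(\Omega)$ function with jump set contained in $K'$ up to an $\mathcal{H}^{d-1}$-null set; this is a standard structure result (\cite[Proposition~4.4]{Ambrosio-Fusco-Pallara:2000}) and should be referenced explicitly.
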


\begin{proof}
{As $\varphi(x, \cdot)$ is convex and superlinear for every $x\in \Omega$, the existence of a bounded minimizer $u$ is based on a nowadays classical argument, combining the De Giorgi's lower semicontinuity theorem \cite{DeG} with the closure and compactness results in $SBV$ by Ambrosio (see \cite[Theorem~4.7 and 4.8]{Ambrosio-Fusco-Pallara:2000}). Moreover, it is easy to check (see, e.g., \cite[Remark~7.16]{Ambrosio-Fusco-Pallara:2000}) that any minimizer $u$ of the functional $\mathcal{F}$  in \eqref{eq:functionals}  belongs to $\mathcal{M}_\eta(\Omega)$ with $\eta:= 2^q \alpha \kappa_d \|g\|_\infty^q$, thus \eqref{eq:7.24AFPbis} holds. The rest of the proof closely follows the argument of, e.g., \cite[Theorem 4.8]{LSSV}, so we omit further details. } 
\end{proof}

\section*{Acknowledgements} 

The authors are members of Gruppo Nazionale per l'Analisi Matematica, la Probabilit\`a e le loro Applicazioni (GNAMPA) of INdAM.
G. Scilla and F. Solombrino have been supported by the project STAR PLUS 2020 – Linea 1 (21‐UNINA‐EPIG‐172) ``New perspectives in the Variational modeling of Continuum Mechanics''. The work of F. Solombrino is part of the project “Variational Analysis of Complex Systems in Materials Science, Physics and Biology” PRIN Project 2022HKBF5C. {The research of C. Leone and A. Verde was supported by PRIN Project 2022E9CF89 ``Geometric Evolution Problems and Shape Optimizations''.  PRIN projects are part of PNRR Italia Domani, financed by European Union through NextGenerationEU. }

 \bibliographystyle{siam}

\end{document}